\documentclass[12pt]{article}
\usepackage{amsfonts,latexsym,rawfonts,amsmath,amssymb,amsthm}
\usepackage{lscape}
\usepackage[cm]{fullpage}
\usepackage{amscd,times, float,rotating}
\usepackage{pb-diagram}
\usepackage{hyperref}

\addtolength{\oddsidemargin}{1cm}
\addtolength{\evensidemargin}{1.5cm}
\addtolength{\textwidth}{-2.5cm} \addtolength{\topmargin}{1cm}
\addtolength{\textheight}{-1cm}

\numberwithin{equation}{section}

\usepackage{geometry}
\geometry{left=3cm,right=3cm,top=3cm,bottom=3cm}

\newcommand{\pd}[2]{\frac {\partial #1}{\partial #2}}
\newcommand{\al}{\alpha}
\newcommand{\bb}{\beta}
\newcommand{\la}{\lambda}

\newcommand{\oo}{\omega}
\newcommand{\Om}{\Omega}

\newcommand{\dd}{\delta}
\newcommand{\Na}{\nabla}

\def\ga{\gamma}

\newcommand{\ee}{\epsilon}

\newcommand{\Si}{\Sigma}
\newcommand{\Te}{\Theta}
\newcommand{\te}{\theta}

\newcommand{\beq}{\begin{equation}}
\newcommand{\eeq}{\end{equation}}
\newcommand{\beqs}{\begin{eqnarray*}}
\newcommand{\eeqs}{\end{eqnarray*}}
\newcommand{\beqn}{\begin{eqnarray}}
\newcommand{\eeqn}{\end{eqnarray}}
\newcommand{\beqa}{\begin{array}}
\newcommand{\eeqa}{\end{array}}

\def\td{\tilde}

\def\ka{{\kappa}}

\def\RR{{\mathbb R}}
\def\NN{{\mathbb N}}
\def\PP{{\mathbb P}}
\def\CC{{\mathbb C}}

\def\SS{{\mathbb S}}
\def\ri{\rightarrow}

\def\pbp{\sqrt{-1}\partial\bar\partial}

\def\vol{{\rm Vol}}

\def\cA{{\mathcal A}}
\def\cC{{\mathcal C}}

\def\cF{{\mathcal F}}

\def\cL{{\mathcal L}}

\def\cO{{\mathcal O}}

\newtheorem{prop}{Proposition}[section]
\newtheorem{theo}[prop]{Theorem}
\newtheorem{lem}[prop]{Lemma}
\newtheorem{claim}[prop]{Claim}
\newtheorem{cor}[prop]{Corollary}
\newtheorem{rem}[prop]{Remark}

\newtheorem{defi}[prop]{Definition}

\title{Extremal K\"ahler metrics and energy functionals on projective   bundles}

\author{Haozhao Li\footnote{Research supported in part by National Science Foundation
of China No. 11001080 and a startup funding from University of
Science and Technology of China.}}

\begin{document}
\bibliographystyle{plain}

\date{}

\maketitle

\tableofcontents

\section{Introduction}
In \cite{[Ca1]}, E. Calabi introduced the extremal K\"ahler metric
on a compact K\"ahler manifold, which is a critical point of the
Calabi functional.  The existence of extremal K\"ahler metrics is a
long standing difficult problem, which is closely related to some
stabilities conditions in algebraic geometry. In the special case of
projective bundles, it is showed in literatures (cf.
\cite{[Ca1]}\cite{[Hwang]}\cite{[TF1]}\cite{[ACGF]} etc.) that the
extremal metrics can be explicitly constructed and have many
interesting properties. However, there exists a K\"ahler manifold
which admits no extremal metrics in certain K\"ahler classes. Thus,
a natural question is whether there are extremal metrics with
singularities on such manifolds and how the energy functionals
behaves. In the present paper, using the construction of
\cite{[ACGF]} we will study the relation between the existence of
extremal K\"ahler metrics and energy functionals on projective bundles.\\

The extremal metrics with conical singularities are studied on
Riemann surfaces in \cite{[Chen1]}\cite{[WZ]}. Similar to the smooth
case, it is believed that the existence of conical extremal metrics
is related to the behavior of energy functionals as well as some
stability conditions, as discussed by Donaldson in
\cite{[Do2]}\cite{[Do3]}.
 Based on the construction of extremal metrics on
projective bundles in \cite{[ACGF]}, we have the result:\\

\begin{theo}\label{theo1.1}On an admissible K\"ahler manifold $M
=\PP(\cO\oplus \cL)\ri S$, there exists a polymonimal $G_x(z)$ in
$z$  such that if $G_x(z)$ is positive on $(-1, 1)$ for some $x\in
(0, 1)$, then $M$ admits a conical extremal metric with
``sufficiently large" angle in the admissible K\"ahler class
corresponding to $x.$

\end{theo}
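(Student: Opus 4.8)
The plan is to produce the conical extremal metric inside the momentum (Calabi) ansatz of the admissible construction of \cite{[ACGF]}, where extremality reduces to a single second-order boundary value problem, and then to read off $G_x$ as the leading term of the resulting momentum profile as the cone angle tends to infinity. Concretely, writing $S=\prod_a S_a$ with $\dim_\CC S_a=d_a$ and $z\in[-1,1]$ the normalized moment coordinate on the fibre $\PP^1$, the $S^1$-invariant K\"ahler metrics in the class determined by $x$ have the form
\[
\omega=\sum_a (1+x_a z)\,\omega_a+dz\wedge\theta,\qquad g=\sum_a(1+x_a z)\,g_a+\frac{dz^2}{\Theta(z)}+\Theta(z)\,\theta^2,
\]
with $x_a=x_a(x)\in(0,1)$, $\theta$ a connection $1$-form, and profile $\Theta>0$ on $(-1,1)$. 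Setting $p(z)=\prod_a(1+x_a z)^{d_a}>0$ on $[-1,1]$, the scalar curvature satisfies $p\,\mathrm{Scal}(g)=\tilde R(z)-(p\Theta)''$ for an explicit polynomial $\tilde R$ depending only on $x$ and the base data, so $g$ is extremal exactly when $\mathrm{Scal}(g)=A+Bz$ is affine in $z$, i.e.
\[
(p\Theta)''=\tilde R(z)-(A+Bz)\,p(z).
\]
The two divisors $D_\pm\cong S$ cut out by $z=\pm1$ are the zero and infinity sections of $M$, and a K\"ahler cone singularity of angle $2\pi\beta_\pm$ along $D_\pm$ corresponds to the boundary data $\Theta(\pm1)=0$ and $\Theta'(\pm1)=\mp2\beta_\pm$, the case $\beta_\pm=1$ being smooth.

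Next I would solve this two-point boundary value problem. Since the right-hand side is a polynomial, integrating twice expresses $F:=p\Theta$ as a polynomial whose four undetermined constants are $A$, $B$ and the two integration constants; the four boundary conditions determine them uniquely, the coefficient matrix being precisely the non-degenerate one of the smooth problem of \cite{[ACGF]}, as only the data $\mp2\beta_\pm$ depend on the angle. Taking $\beta_+=\beta_-=\beta$, these constants are affine in $\beta$, so $F=\beta\,G_x+H_x$ for polynomials $G_x,H_x$ independent of $\beta$, and I define $G_x$ to be this leading coefficient. Because $F(\pm1)=0$ for every $\beta$ one gets $G_x(\pm1)=0$, and comparing leading terms in $F'(\pm1)=\mp2\beta\,p(\pm1)$ gives $G_x'(1)=-2p(1)<0<2p(-1)=G_x'(-1)$, so $G_x$ has simple zeros at the endpoints and is positive just inside them.

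Finally I would establish positivity of $\Theta$ for large $\beta$. As $p>0$ on $[-1,1]$, positivity of $\Theta$ on $(-1,1)$ is equivalent to positivity of $F=\beta G_x+H_x$ there. On a compact subinterval $[-1+\delta,1-\delta]$, if $G_x>0$ on $(-1,1)$ then $G_x\ge c>0$, whence $F>0$ once $\beta>\max|H_x|/c$. Near $z=1$ I factor $G_x(z)=(1-z)g(z)$ and $H_x(z)=(1-z)h(z)$ with $g$ continuous and $g(1)=2p(1)>0$, so $F=(1-z)(\beta g+h)>0$ on a one-sided neighbourhood for $\beta$ large, and symmetrically at $z=-1$; choosing $\beta$ above the finite thresholds from all three regions yields $\Theta>0$ throughout $(-1,1)$, and the ansatz then produces a genuine extremal K\"ahler metric on $M$ with cone angle $2\pi\beta$ along $D_\pm$ in the class of $x$. \emph{The main obstacle} is exactly this uniform positivity up to the boundary: the interior estimate is immediate from compactness, but it must be matched against the endpoint behaviour, and it is the simple-zero structure of $G_x$ forced by the angle conditions that lets the two regions overlap for all sufficiently large $\beta$; verifying the non-degeneracy of the boundary value problem (so that $G_x$ is well defined) and that the resulting object is a bona fide conical K\"ahler metric are the remaining technical points.
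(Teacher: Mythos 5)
Your proposal follows essentially the same route as the paper: reduce to positivity of the extremal polynomial $F_{\Omega}$ via the momentum ansatz, note that the boundary value problem with the angle conditions $F'(\pm 1)=\mp 2\kappa p_c(\pm 1)$ makes $F_{\Omega}$ affine in $\kappa$, define $G_x$ as the coefficient of $\kappa$, and conclude positivity of $F_{\Omega}$ on $(-1,1)$ for $\kappa$ sufficiently large. Your endpoint analysis (the simple zeros of $G_x$ at $z=\pm 1$ with $G_x'(-1)>0>G_x'(1)$, matched against the vanishing of the $\kappa$-independent part) is in fact more careful than the paper's one-line conclusion, which implicitly relies on exactly this overlap argument; the only thing the paper adds that you omit is the explicit closed formula for $G_x$ in terms of $p_c$ and the moments $\alpha_r$.
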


The notations in Theorem \ref{theo1.1} will be given in Section 2.
Theorem \ref{theo1.1} gives a criterion to determine whether there
exist conical admissible K\"ahler metrics on an admissible
manifolds. Moreover, following the same arguments in \cite{[ACGF]}
we can show that the existence of conical admissible extremal
metrics is equivalent to the positivity of a polynomial. In
\cite{[TF1]}, T{\o}nnesen-Friedman gave an interesting example which
admits no extremal metrics in some admissible K\"ahler classes.
However, using the arguments of Theorem \ref{theo1.1} we can show
that it admits
conical extremal metrics in any admissible K\"ahler class.\\

\begin{cor}\label{cor}On the admissible manifold $\PP(\cO\oplus \cL)\ri
\Si$ where $\Si$ is a Riemann surface with genus $g(\Si)>1$, there
exists a conical extremal metric
 in any admissible K\"ahler class.\\
\end{cor}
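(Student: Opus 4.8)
The plan is to specialize Theorem~\ref{theo1.1} to the case where the base $S=\Si$ is a Riemann surface of genus $g(\Si)>1$. In this situation the admissible construction of \cite{[ACGF]} involves a single factor with constant scalar curvature, and the relevant data (the base scalar curvature, the normalizing constants, and the defining polynomial $G_x(z)$) become completely explicit functions of the genus and of the parameter $x\in(0,1)$. Thus the whole corollary reduces to verifying the hypothesis of Theorem~\ref{theo1.1}, namely the positivity of $G_x(z)$ on $(-1,1)$, for \emph{every} admissible K\"ahler class, i.e. for every $x\in(0,1)$ — rather than merely for some $x$.

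First I would write down $G_x(z)$ explicitly in this one-dimensional-base setting, using the formulas set up in Section~2. The sign of the constant scalar curvature of $\Si$ is the crucial input: since $g(\Si)>1$, the base metric has negative scalar curvature, and this negativity should enter $G_x(z)$ with a favorable sign. I would then expand $G_x(z)$ as a polynomial in $z$ and examine its value and derivatives at the endpoints $z=\pm 1$, where by the normalization built into the admissible ansatz one expects $G_x(\pm1)=0$ together with controlled boundary derivatives. The goal of this step is to reduce the positivity of $G_x$ on the open interval to a manageable inequality on the coefficients.

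Next I would establish positivity on $(-1,1)$ by a convexity or sign-of-derivative argument: typically one shows that $G_x$ vanishes to the appropriate order at $\pm1$ and that the remaining factor has no interior zeros, for instance by showing $G_x''$ (or an analogous auxiliary function) has a definite sign governed by the negative base curvature. Because the sign we need is forced by $g(\Si)>1$ uniformly in $x$, this argument should go through for all $x\in(0,1)$ simultaneously, which is exactly what the phrase ``any admissible K\"ahler class'' demands and what makes this example sharper than the T{\o}nnesen-Friedman non-existence result for smooth extremal metrics.

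The main obstacle I anticipate is the uniformity in $x$: one must control the $x$-dependence of the normalizing constants in $G_x(z)$ and rule out the possibility that for some boundary values of $x$ the polynomial develops an interior zero or a bad endpoint behavior. Handling this likely requires a careful estimate showing that the destabilizing positive contributions to $G_x$ are dominated, on all of $(-1,1)$ and for all $x$, by the negative-curvature term coming from $g(\Si)>1$; once that domination is in hand, Theorem~\ref{theo1.1} applies directly and produces the desired conical extremal metric with sufficiently large angle in each admissible class.
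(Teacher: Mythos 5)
Your high-level reduction is the same as the paper's: specialize to a one-dimensional base, so $p_c(z)=1+xz$, and verify the hypothesis of Theorem \ref{theo1.1} for \emph{every} $x\in(0,1)$ (note that no uniformity in $x$ is actually needed: the cone angle is allowed to depend on the class, so one applies Theorem \ref{theo1.1} class by class). However, the mechanism you propose for establishing the positivity of $G_x(z)$ is inverted, and this is a genuine gap rather than a stylistic difference.

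The point of Theorem \ref{theo1.1} (see the proof of Theorem \ref{theo3}) is that $G_x(z)$ depends \emph{only} on $p_c(z)$; the scalar curvature of the base does not appear in $G_x$ at all. Concretely, the paper computes
$F_{\Om}(z)=\frac{1-z^2}{2(3-x^2)}\,Q(z)$ with
$Q(z)=\ka\bigl(2x^2z^2+2x(3-x^2)z+6-4x^2\bigr)+sx^3(1-z^2)$,
where $2s<0$ is the (constant) scalar curvature of $\Si$ when $g(\Si)>1$. So the curvature term $sx^3(1-z^2)$ is \emph{negative} on $(-1,1)$: far from entering ``with a favorable sign,'' it is exactly the destabilizing contribution, and it is why T{\o}nnesen-Friedman's example admits no smooth ($\ka=1$) extremal metric in some classes. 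What rescues the construction is that the $\ka$-coefficient --- this is the role played by $G_x$ --- is strictly positive on $[-1,1]$ for every $x\in(0,1)$: its values at $z=\pm 1$ are $2(1\pm x)(3-x^2)>0$, and as a quadratic in $z$ its vertex sits at $-(3-x^2)/(2x)<-1$, so it is increasing on $[-1,1]$ and hence positive there. Consequently, taking $\ka>\frac{-sx^2}{(1-x)(3+x)}$ forces $Q>0$, hence $F_{\Om}>0$ on $(-1,1)$, and Theorem \ref{theo2.1} produces the conical extremal metric. Your planned argument --- endpoint vanishing of $G_x$ (false in this normalization, since $G_x(\pm1)\neq 0$ after the factor $1-z^2$ is pulled out), a sign-of-$G_x''$ argument ``governed by the negative base curvature,'' and a domination of ``destabilizing positive contributions'' by ``the negative-curvature term'' --- cannot be carried out as stated, because $s$ is absent from $G_x$ and the domination runs the other way: the curvature-free positive term, amplified by a sufficiently large cone angle, beats the fixed negative curvature contribution.
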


Next we will study the relations between conical extremal metrics
and energy functionals. Recall that in the smooth case, G. Tian
conjectured in \cite{[tian]} that the existence of extremal K\"ahler
metrics is equivalent to the properness of the modified $K$-energy,
which is a generalization of Mabuchi's $K$-energy by Guan
\cite{[Guan]} and Simanca \cite{[simanca]}.  In \cite{[ACGF]}, using
the theory of Chen-Tian \cite{[CT]}  a sufficient and necessary
condition is given for the existence of general extremal metrics on
an admissible manifold. Their results can be extended to conical
admissible extremal metrics except the auguments using Chen-Tian's
results. However, if we only consider the {\it admissible} K\"ahler
metrics, we can show the following  result:\\

\begin{theo}\label{theo02}Let $M=\PP(\cO\oplus \cL)\ri S$ be  an admissible
manifold. The following properties are equivalent for a conical
admissible K\"ahler class $\Om$:
\begin{enumerate}
  \item[(1)]  $M$  admits an extremal K\"ahler metric in
  $\Om$;
  \item[(2)] The extremal polynomial $F_{\Om}(z)$ is positive on $(-1, 1);$
  \item[(3)] The
modified $K$-energy is proper on $\Om.$\\
\end{enumerate}
\end{theo}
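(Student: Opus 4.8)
The plan is to prove Theorem~\ref{theo02} by establishing the cycle of implications
$(1)\Rightarrow(2)\Rightarrow(3)\Rightarrow(1)$, reducing everything on an admissible
manifold to the one-variable ODE that governs admissible metrics. The starting point is
the momentum construction from \cite{[ACGF]}: on $M=\PP(\cO\oplus\cL)\ri S$, an admissible
K\"ahler metric in the class $\Om$ corresponding to $x$ is determined by a smooth profile
function $\Phi(z)$ on $[-1,1]$ satisfying positivity $\Phi>0$ on $(-1,1)$ together with the
conical boundary behaviour at $z=\pm1$ (prescribed values of $\Phi'(\pm1)$ fixing the cone
angle). First I would recall how the extremality equation collapses, under the admissible
ansatz, to the statement that the scalar curvature is an affine function of the momentum
variable $z$, and how integrating the resulting ODE twice against the weight
$\prod(1+r_a z)^{d_a}$ produces the \emph{extremal polynomial} $F_{\Om}(z)$; the key
algebraic fact is that the boundary (conical) conditions uniquely pin down the two affine
constants, so $F_{\Om}$ is an explicit polynomial depending only on $\Om$ and the cone data.

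For $(1)\Rightarrow(2)$: if an admissible extremal metric exists, its profile $\Phi$ solves
the second-order ODE with the correct boundary data, and one checks that $\Phi$ and
$F_{\Om}$ differ only by the positive weight factor, so $\Phi>0$ on $(-1,1)$ forces
$F_{\Om}>0$ there. Conversely, for $(2)\Rightarrow(1)$ one runs the construction in reverse:
given $F_{\Om}>0$ on $(-1,1)$, divide by the weight to recover a candidate $\Phi>0$, verify
the positivity and the boundary/endpoint smoothness conditions that make the associated
tensor a genuine conical K\"ahler metric, and confirm by construction that it is extremal.
This is the direction where I expect the conical boundary analysis to require the most care,
but it is parallel to the smooth argument in \cite{[ACGF]} with the angle parameters
modifying only the endpoint derivative conditions.

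The substantive part is the equivalence with properness, and here I would decouple it from
Chen--Tian theory (as the excerpt flags that the general argument uses \cite{[CT]}, which is
exactly what we avoid by restricting to admissible metrics). The plan is to compute the
modified $K$-energy $\mathcal{M}(\Phi)$ restricted to the space of admissible profiles as an
explicit functional of $\Phi$ and $\Phi'$ — an integral of the form
$\int_{-1}^{1}\bigl(-\log\Phi + (\text{affine in }z)\,\Phi\bigr)\,w(z)\,dz$ plus boundary
terms coming from the cone angles — whose Euler--Lagrange equation is precisely the
extremal ODE. Because this reduced functional is \emph{convex} in $\Phi$ along the linear
structure of admissible profiles, the existence of a critical point (an admissible extremal
metric) is equivalent to the functional attaining its infimum, and a direct estimate shows
this infimum is attained with coercive (proper) behaviour exactly when the minimizing
$\Phi$ stays strictly positive, i.e.\ when $F_{\Om}>0$ on $(-1,1)$. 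Thus $(3)\Leftrightarrow(1)$
and $(3)\Leftrightarrow(2)$ follow from analyzing one convex one-variable variational problem.

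The hard part will be the bookkeeping in the conical case: isolating the boundary
contributions that the cone angles add to both $F_{\Om}$ and to the reduced $K$-energy, and
verifying that degenerate (non-strictly-positive) endpoints of $F_{\Om}$ correspond exactly
to the loss of properness — that is, exhibiting an explicit destabilizing family of profiles
$\Phi_t$ along which $\mathcal{M}$ stays bounded above whenever $F_{\Om}$ touches zero in
$(-1,1)$. Establishing that matching between the vanishing locus of $F_{\Om}$ and the
non-coercivity directions is the technical crux; the convexity of the reduced functional is
what makes it tractable without invoking \cite{[CT]}.
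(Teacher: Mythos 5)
Your skeleton coincides with the paper's: the equivalence $(1)\Leftrightarrow(2)$ is the conical analogue of Proposition 8 of \cite{[ACGF]} (Theorem \ref{theo2.1} in the paper, proof omitted there), and the real content is the equivalence with properness, attacked by reducing the modified $K$-energy to a one-variable functional on admissible data. The paper's reduction is $\cF(u)=-\int_{-1}^1 p_c(z)\log\frac{u''(z)}{u_{c,\ka}''(z)}\,dz+\int_{-1}^1 F_{\Om}(z)(u''(z)-u_{c,\ka}''(z))\,dz$ on the space $\cC_{\ka}$ of symplectic potentials. One small correction first: the convexity you invoke holds in the symplectic potential $u$ (the functional is linear plus $-\log$ of $u''$), not in the profile $\Phi=\Te$; rewritten in $\Phi$ it becomes $\int p_c\log\Phi+\int F_{\Om}/\Phi$ up to constants, which is not convex along linear paths of profiles. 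That is fixable, but two essential ingredients of your plan are genuinely missing or wrong.

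First, ``proper'' in statement (3) is Tian's notion: bounded below by an unbounded increasing function of Aubin's $J$-functional. Any proof must therefore compare $J$ with whatever quantity your coercivity is measured against; the paper does this in Lemma \ref{lemJ1}, showing $\big|J_{\oo_{g_c}}(\td\varphi)-C_1\int_{-1}^1 u\,dz\big|\leq C$, and this step is delicate precisely because of the conical singularity (no Green's function lower bound is available, so it is replaced by the direct estimates of Claims \ref{claim1} and \ref{claim2}). Your proposal has no counterpart to this. Second, and more seriously, your central mechanism --- that for the convex reduced functional, existence of a critical point is equivalent to the infimum being attained ``with coercive (proper) behaviour'' --- is false: a convex functional possessing a minimizer is bounded from below but need not be proper, and on these very manifolds the paper's Theorem \ref{theo03} exhibits classes where $F_{\Om}\geq 0$ has repeated roots, the $K$-energy is bounded below (exactly the conclusion your convexity argument yields), yet properness fails. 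So convexity alone cannot separate (3) from mere lower boundedness. The paper's actual mechanism is the quantitative estimate $\cL(u)=\int_{-1}^1 F_{\Om}(z)u''(z)\,dz\geq\dd\int_{-1}^1 u(z)\,dz$ for all $u\in\cC_{\ka}$, which uses $F_{\Om}>0$ \emph{together with} the conical boundary conditions $F_{\Om}'(\pm1)=\mp2\ka p_c(\pm1)$ (yielding $F_{\Om}(z)\geq c(1-z)$ on $[0,1]$ and $F_{\Om}(z)\geq c'(1+z)$ on $[-1,0]$) and the convexity of $u$; Lemma \ref{lemJ2} then converts this into properness. Positivity of $F_{\Om}$ enters through this boundary-to-interior lower bound, not through ``the minimizer staying positive.'' For the converse direction you do identify the right construction (a destabilizing family where $F_{\Om}$ touches zero), which the paper implements via bump functions $f_k''(z)=k\,\eta(k(z-z_0))$ concentrated at a repeated root $z_0$ of order $2m$, so that $\cL(kf_k)=O(k^{1-2m})\ri 0$ while $\int_{-1}^1 kf_k\,dz\ri\infty$, contradicting properness; but without the $J$-comparison and the coercivity estimate above, your $(2)\Rightarrow(3)$ direction does not close.
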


The admissible manifolds and the extremal polynomials were
introduced in \cite{[ACGF]}, and we will explain all the details in
Section 2. The equivalence of part (1) and part (2) of Theorem
\ref{theo02} is due to \cite{[ACGF]}.
 The proof on the properness of the modified $K$ energy relies on Donaldson
\cite{[Do1]} and Zhou-Zhu's work \cite{[ZZ]}, but we need to
carefully study the energy functionals in our situation. In the K\"ahler-Einstein
case, G. Tian prove the equivalence of the existence of K\"ahler-Einstein metrics and the
properness of the energy functionals in \cite{[Tian97]}.   \\

Now we study  the modified $K$-energy on admissible manifolds. The
lower boundedness of the modified $K$-energy is very subtle and it
is conjectured by X. X. Chen in  \cite{[Chen3]} \cite{[Chen4]} that
it is equivalent to the property that the infimum of the modified
Calabi energy is zero, and it might be related to the existence of
extremal metrics with singularities. On the admissible manifolds, we
can verify this conjecture and give the full criteria on the
modified $K$-energy
in terms of the extremal polynomial:\\

\begin{theo}\label{theo03}Let $M=\PP(\cO\oplus \cL)\ri S$ be  an admissible
manifold. Then the following properties are equivalent for a conical
admissible K\"ahler class $\Om:$
\begin{enumerate}

\item[(1)] The modified $K$-energy is bounded from below on $\Om$;
\item[(2)] The extremal polynomial $F_{\Om}(z)$ is nonnegative on $(-1, 1)$;
\item[(3)] The infimum of the modified Calabi energy on $\Om$ is zero.
\end{enumerate}
Moreover, if $F_{\Om}(z)$ is nonnegative and has $m$ distinct
repeated roots $z_i$ on $(-1, 1)$, then $M$ can split into $m+1$
parts, and each part admits an admissible extremal K\"ahler metric
with generalized cusp singularities at the ends $z=z_i.$

\end{theo}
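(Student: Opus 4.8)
The plan is to reduce both energy functionals to one-dimensional functionals of the momentum profile and then read off all three conditions from the single polynomial $F_\Om$. Recall from the admissible construction of Section 2 that every admissible K\"ahler metric in $\Om$ is determined by a momentum profile $\Theta(z)$ on $[-1,1]$, positive on $(-1,1)$, vanishing at $z=\pm1$, and with boundary derivatives $\Theta'(\pm1)$ fixed by the conical angle; the extremal polynomial is $F_\Om=\Theta_{\mathrm{ext}}\,p_c$, where $p_c(z)>0$ is the weight of the construction and $\Theta_{\mathrm{ext}}$ is the (possibly non-admissible) profile solving the extremal ODE with these boundary data. Writing $u=\Theta\,p_c$, the base curvature contribution to the scalar curvature is $\Theta$-independent, so the deviation from the extremal affine function $\ell$ is $S-\ell=-\,(u-F_\Om)''/p_c$. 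Hence the modified Calabi energy reduces to $\mathrm{Ca}(\Theta)=\int_{-1}^{1}\big[(u-F_\Om)''\big]^2\,p_c^{-1}\,dz$, while the modified $K$-energy reduces to $\mathcal M(\Theta)=\int_{-1}^1(-\log\Theta)\,p_c\,dz+\mathcal L(\Theta)$ with $\mathcal L$ affine in $\Theta$, so that $\mathcal M$ is convex along the affine structure with Euler--Lagrange equation exactly $u=F_\Om$. Obtaining these reductions with the correct boundary contributions for conical rather than smooth metrics is the first technical step.

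For $(2)\Leftrightarrow(3)$ I argue directly from $\mathrm{Ca}$. If $F_\Om\ge0$ on $(-1,1)$, then $F_\Om$ meets the boundary data and is nonnegative, so I smooth its interior zeros to produce admissible profiles $u_\varepsilon>0$ with the same boundary values and $(u_\varepsilon-F_\Om)''\to0$ in $L^2(p_c^{-1}\,dz)$, giving $\inf\mathrm{Ca}=0$. Conversely, if $F_\Om(z_0)<0$ for some $z_0$, then every admissible $u>0$ satisfies $u-F_\Om>0$ near $z_0$ while $u-F_\Om$ retains fixed boundary values, and a Poincar\'e-type estimate forces $\int\big[(u-F_\Om)''\big]^2 p_c^{-1}\,dz$ to stay bounded below by a positive constant, so $\inf\mathrm{Ca}>0$. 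This establishes the equivalence of $(2)$ and $(3)$.

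For $(2)\Leftrightarrow(1)$ I exploit convexity of $\mathcal M$. When $F_\Om\ge0$ the profile $\Theta_{\mathrm{ext}}=F_\Om/p_c$ is the (possibly boundary) minimizer; since near a double zero $z_i$ one has $-\log\Theta_{\mathrm{ext}}\sim-2\log|z-z_i|$, which is integrable against $p_c\,dz$, the value $\mathcal M(\Theta_{\mathrm{ext}})$ is finite and by convexity bounds $\mathcal M$ from below, proving $(2)\Rightarrow(1)$. For $(1)\Rightarrow(2)$, if $F_\Om<0$ somewhere I construct an affine ray of admissible profiles heading toward the infeasible extremal profile in the region where $F_\Om<0$; along it the affine term $\mathcal L$ decreases linearly while the $-\log$ barrier only obstructs $\Theta\to0$, so $\mathcal M\to-\infty$, contradicting boundedness. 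I expect to follow here the destabilizing constructions of Donaldson and of Zhou--Zhu, adapted to the conical boundary values, which is where the main work lies.

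Finally, for the splitting statement, suppose $F_\Om\ge0$ has $m$ distinct repeated (hence at least double) roots $z_1<\cdots<z_m$ in $(-1,1)$. Setting $z_0=-1$ and $z_{m+1}=1$, on each subinterval $[z_i,z_{i+1}]$ the restriction $F_\Om|_{[z_i,z_{i+1}]}$ solves the same extremal ODE, is positive on the open interval, and satisfies $F_\Om(z_i)=F_\Om'(z_i)=0$ at interior cut points; these double-zero conditions are precisely those producing a complete generalized cusp end rather than a compact section. I would cut $M$ along the level hypersurfaces $\{z=z_i\}$ into $m+1$ pieces, check that on each piece $F_\Om$ is the extremal polynomial of the corresponding cusp-completed admissible problem and is positive in the interior, and invoke the existence criterion of Theorem \ref{theo02} (the equivalence of $(1)$ and $(2)$ there, due to \cite{[ACGF]}) in the cusp setting to produce on each piece an admissible extremal metric with generalized cusp singularities at the ends $z=z_i$. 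The subtle point is to verify that the restricted boundary data genuinely define an admissible cusp problem and that the construction of \cite{[ACGF]} extends across these ends.
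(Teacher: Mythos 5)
Your proposal is correct and follows essentially the same route as the paper: the same one-dimensional reduction of both functionals, the same smoothing of interior double zeros and Poincar\'e-type estimate for the Calabi-energy equivalence $(2)\Leftrightarrow(3)$, the same lower bound via the formal minimizer $F_{\Om}/p_c$ (finite precisely because $\log F_{\Om}$ is integrable at finite-order zeros) together with a destabilizing affine ray for $(1)\Leftrightarrow(2)$, and the same cutting of $M$ along $\{z=z_i\}$ with double zeros producing generalized cusp ends. Two cosmetic caveats: your functional $\int(-\log\Theta)\,p_c\,dz+\mathcal{L}(\Theta)$ is affine-plus-barrier in $1/\Theta$ (the symplectic Hessian $u''$), not in $\Theta$ itself; and for the splitting the paper needs no ``cusp version'' of Theorem \ref{theo02} --- the profile $\Theta=F_{\Om}/p_c$ on each subinterval directly defines the extremal metric, and one only checks the end asymptotics via the substitution $z-z_i=e^{-s}$.
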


 The generalized cusp singularity is defined in Section
\ref{Sec5.2}, and it is a generalization of the cusp singularity.
Combining Theorem \ref{theo03} with Theorem \ref{theo02}, we know
that the modified $K$-energy is bounded from below but not proper if
and only if the extremal polynomial is nonnegative and has repeated
roots on $(-1, 1).$ The phenomena that $M$ may admit complete
extremal metrics on each parts is   similar to the result of G.
Sz\'ekelyhidi in \cite{[S]}, where he discussed the minimizers of
the Calabi energy. It is easy to find an admissible manifold such
that the extremal polynomial satisfies this property. For example,
we check T{\o}nnesen-Friedman's example as in Corollary \ref{cor} and have the following:\\

\begin{cor}\label{cor02} On the admissible manifold $M=\PP(\cO\oplus \cL)\ri
\Si$ where $\Si$ is a Riemann surface with genus $g(\Si)>1$, there
is a point $x_s\in (0, 1)$ such that  for the admissible K\"ahler
class $\Om(x, 1)$ with $x\in (0, 1)$,
\begin{enumerate}
  \item[(1)] if $x\in (0, x_s),$ then $M$ admits a smooth admissible extremal
  metric on $\Om(x, 1);$
  \item[(2)] if $x=x_s,$ then the modified
$K$-energy is bounded from below but not proper on $\Om(x, 1). $ $M$
can split into two parts, and each part admits an admissible
extremal metric with  a cusp singularity on the fibre;
  \item[(3)] if $x\in (x_s, 1),$ $M$ can split into three parts,
  two of which has positive extremal polynomials and  admit   admissible extremal metrics with  conical
  singularities   on the fibre,  and one has negative extremal polynomial which
 determines no admissible extremal metrics with singularities.
\end{enumerate}

\end{cor}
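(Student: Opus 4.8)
The strategy is to reduce the whole corollary to the sign analysis of a single explicit polynomial and then apply Theorems \ref{theo1.1}, \ref{theo02} and \ref{theo03}. Since $g(\Si)>1$, the base carries a K\"ahler--Einstein metric of constant \emph{negative} scalar curvature, and the construction of \cite{[ACGF]} produces an extremal polynomial $F_{\Om(x,1)}(z)$ of degree four which vanishes to first order at the endpoints $z=\pm1$. Hence I may factor
\[
  F_{\Om(x,1)}(z)=(1-z^2)\,Q_x(z),
\]
where $Q_x$ is a quadratic whose coefficients are explicit functions of $x\in(0,1)$ and of the negative base scalar curvature. On $(-1,1)$ one has $1-z^2>0$, so the sign of $F_{\Om(x,1)}$ — which is exactly the quantity controlling all three theorems — equals that of $Q_x$, and the interior zeros of $F_{\Om(x,1)}$ are precisely the roots of $Q_x$ in $(-1,1)$. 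Two facts hold for every $x\in(0,1)$: the momentum profile of \cite{[ACGF]} is proportional to $F_{\Om(x,1)}$ and must be positive, and the boundary values $Q_x(\pm1)$ are \emph{strictly positive}, since they encode the positive parameters of the admissible class.

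The heart of the argument is the bifurcation of $Q_x$ as $x$ runs over $(0,1)$. For $x$ small the admissible class lies in the regime where \cite{[ACGF]} produces a smooth extremal metric, so $Q_x>0$ on $[-1,1]$; for $x$ close to $1$, T{\o}nnesen-Friedman's computation \cite{[TF1]} shows that no extremal metric exists, i.e.\ $Q_x$ is somewhere negative on $(-1,1)$. Let $x_s$ be the first value at which positivity on $(-1,1)$ fails. Because $Q_x(\pm1)>0$ for all $x$, a root of $Q_x$ can never cross the endpoints $z=\pm1$; hence the failure cannot occur at $\pm1$ and must occur through an interior double root $z_0\in(-1,1)$ at which $Q_{x_s}$ attains a zero minimum. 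For $x>x_s$ the discriminant of $Q_x$ is positive and, since again $Q_x(\pm1)>0$, the two simple roots $z_1<z_2$ cannot escape $(-1,1)$; thus $Q_x<0$ on $(z_1,z_2)$ and $Q_x>0$ on $(-1,z_1)\cup(z_2,1)$ throughout $(x_s,1)$. To see that $x_s$ is a \emph{single} threshold — so that $Q_x>0$ on $(-1,1)$ for all $x<x_s$ — I would show that $\min_{[-1,1]}Q_x$ is strictly decreasing in $x$ near $x_s$, equivalently that the discriminant of $Q_x$ changes sign exactly once, using the explicit $x$-dependence of the coefficients.

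With this root picture the three cases follow by reading off the order of vanishing of $F_{\Om(x,1)}$ and invoking the theorems. For $x\in(0,x_s)$ we have $F_{\Om(x,1)}>0$ on $(-1,1)$, so Theorem \ref{theo02} gives a smooth admissible extremal metric, which is (1). At $x=x_s$ the polynomial is nonnegative with exactly one repeated root, so $m=1$ in Theorem \ref{theo03}: the modified $K$-energy is bounded from below, while it fails to be proper by Theorem \ref{theo02} because $F_{\Om(x_s,1)}$ is not strictly positive; the single repeated root splits $M$ into $m+1=2$ pieces, and as this root has order exactly two the generalized cusp of Theorem \ref{theo03} is an honest cusp — this is (2). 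For $x\in(x_s,1)$ the two simple interior roots $z_1<z_2$ split $(-1,1)$, and with it $M$, into three parts; on $(-1,z_1)$ and $(z_2,1)$ the restricted extremal polynomial is positive and vanishes to first order at the interior endpoint, so by the conical existence result underlying Theorem \ref{theo1.1} each of these carries an admissible extremal metric with a conical (order-one) singularity along the corresponding fibre, whereas on $(z_1,z_2)$ one has $F_{\Om(x,1)}<0$, the profile is forced negative, and no admissible extremal metric can exist; this is (3).

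The main obstacle is the bifurcation step: proving that positivity of $Q_x$ on $(-1,1)$ fails at a \emph{unique} $x_s$ and that it fails through an interior double root. The endpoint positivity $Q_x(\pm1)>0$ already rules out roots leaking in through $z=\pm1$ and thereby forces the interior double root at the threshold; the remaining, genuinely computational, point is the monotonicity of $\min_{[-1,1]}Q_x$ (equivalently, the single sign change of the discriminant) in $x$, which must be extracted from the explicit coefficients coming from the negative base scalar curvature. Once this monotonicity is in hand, the rest of the proof is the dictionary between the order of vanishing of $F_{\Om(x,1)}$ at its zeros (order one $\Rightarrow$ cone, order two $\Rightarrow$ cusp) and the singularity types in the statement, which is exactly what Theorems \ref{theo1.1} and \ref{theo03} provide.
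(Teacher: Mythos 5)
Your overall route is the same as the paper's: factor $F_{\Om}(z)=\tfrac{1-z^2}{2(3-x^2)}Q(z)$ with $Q$ an explicit quadratic (the paper's (\ref{eq201})--(\ref{eq202}) with $\ka=1$), observe $Q(\pm1)>0$, locate a threshold $x_s$ via the discriminant of $Q$, and then translate the root structure (double root $\Rightarrow$ cusp via Lemma \ref{lem800}, two simple roots $\Rightarrow$ conical pieces via Remark \ref{rem}) through Theorems \ref{theo02} and \ref{theo03}. But there is a genuine gap, and you have named it yourself without closing it: the claim that the discriminant $\Delta(x)=(6x-2x^3)^2-4(2x^2-sx^3)(6+sx^3-4x^2)$ changes sign \emph{exactly once} on $(0,1)$. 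This is not a routine detail; it is the computational core of the paper's proof, established there by a derivative cascade: one checks $\Delta^{(4)}(x)=192+1440x^2-2880sx+1440s^2x^2>0$ on $(0,1)$, then uses the endpoint signs $\Delta^{(3)}(0)=144s<0$, $\Delta^{(3)}(1)>0$ to get a unique sign change of $\Delta^{(3)}$, and descends step by step through $\Delta''$, $\Delta'$ (using $\Delta'(0)=0$, $\Delta'(1)>0$) to $\Delta$ itself (using $\Delta(0)=0$, $\Delta(1)=4+s^2>0$). Without this, your definition of $x_s$ as the ``first failure of positivity'' controls only the regimes $x<x_s$ and $x=x_s$: a priori the failure set could fail to be an interval, so part (3) of the corollary --- that for \emph{every} $x\in(x_s,1)$ the quadratic has two simple interior roots --- is simply not established. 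Your continuity argument that roots ``cannot escape through $\pm1$'' also presupposes $\Delta>0$ on all of $(x_s,1)$, so it is circular at exactly the missing point.

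Two smaller discrepancies are worth noting. First, you anchor the two ends of the bifurcation in external results (existence for small $x$ from \cite{[ACGF]}, nonexistence for $x$ near $1$ from \cite{[TF1]}), whereas the paper derives both regimes internally from the sign of $\Delta$; your version makes the corollary depend on facts the paper's self-contained computation replaces. Second, to conclude in part (3) that both simple roots lie in $(-1,1)$, the paper combines $Q(\pm1)>0$ with the location of the vertex of $Q$, namely $-\tfrac{6x-2x^3}{2(2x^2-sx^3)}\in(-1,1)$ (its (\ref{eq301}), which uses $s<0$); this is the clean non-circular substitute for your continuity-of-roots argument and you should adopt it. With the discriminant lemma actually proved and (\ref{eq301}) in hand, the rest of your dictionary (order-one zero $\Rightarrow$ cone angle $\pi F_{\Om}'(z_0)/p_c(z_0)$, order-two zero $\Rightarrow$ cusp, negative piece $\Rightarrow$ no admissible extremal metric) matches the paper and is correct.
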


The above results give close relations between the modified
$K$-energy and the existence of the extremal metrics. In a general
admissible manifold, the set of all admissible K\"ahler classes can
be divided into two subsets:  one  admits extremal metrics and the
other doesn't. The boundary K\"ahler classes of the two subsets have
the property that the modified $K$-energy is bounded from below but
not proper. We expect that these properties can be extended to toric
manifolds, and we will explore this in a forthcoming paper.\\

\noindent {\bf Acknowledgements}:  The author would like to thank
Professor Xiuxiong Chen and Xiaohua Zhu for warm encouragement and
stimulating discussions.

\section{Admissible K\"ahler metrics}
In this section, we recall some basic facts on the admissible
K\"ahler manifolds from \cite{[ACGF]}. The general admissible
K\"ahler manifolds are defined in \cite{[ACGF]} and here we only
consider a special case for simplicity.
\begin{defi}\label{defi:M} A projective vector bundle of the
form $M=\PP(\cO\oplus \cL)\ri S$ is called an admissible manifold if
$M$ satisfies the following properties.
\begin{enumerate}
  \item $S$ is a compact complex manifold covered by a product $\td
S=S_1\times S_2\times\cdots\times
  S_N$ of simply connected K\"ahler manifold $(S_i, g_i, \oo_i)$ of
  complex dimension $d_i$. Every metric $g_i$ has constant scalar curvature
  $S_{g_i}=2d_i s_i.$ $\cL$ denotes a holomorphic line bundle over
  $S.$
  \item $z$ is a Morse-Bott function on $M$  with image $[-1, 1]$ and the critical set
  $z^{-1}(\{-1, 1\}),$ and $M^0:=z^{-1}((-1, 1))$ is a principal
  $\CC^*$ bundle over $\td S.$
      \item  There are real numbers $x_i\in (0, 1), i=1,\cdots, N$
  such that the metric on $M^0$  is K\"ahler:
  \beqn
g&=&\sum_{i=1}^N\frac {1+x_iz}{x_i}g_i+\frac
{dz^2}{\Theta(z)}+\Theta(z)\theta^2;\label{eq0.1}\\
\oo_g&=&\sum_{i=1}^N\frac {1+x_iz}{x_i}\oo_i+dz\wedge
\te,\label{eq0.2}
  \eeqn where $\te$ is a connection $1$-form with $\theta(K)=1$ and $d\te=\sum_i\oo_i$.
  Here $K=J\Na_g z$ is a Killing vector field  generating the $\SS^1$ action on $M.$
  $\Te(z)$ is a smooth function on $[-1, 1]$ with
  \beq \Te(\pm1)=0,\quad \Te(z)>0,\;\;z\in (-1, 1)\eeq
and satisfies some addtional conditions which we will describe
below.\\
\end{enumerate}

\end{defi}

In \cite{[ACGF]}, the function $\Te(z)$ satisfies the boundary
conditions $\Te'(\pm 1)=\mp2$ so that the metric $g$ can extend to
$M$. In the present paper, we allow that each fibre of the
admissible manifold $M$ admits conical singularities.  Consider the
fiber metric \beq g_f=\frac
{dz^2}{\Theta(z)}+\Theta(z)\theta^2,\label{eq2.7}\eeq we define the
conical singularities below:
\begin{defi}\label{conical}A metric $g$ on a Riemann surface $\Si$ is called conical with angle
$2\pi\ka$ of order $\ga$ at a point $p\in \Si$ , if there is a
neighborhood $U$ of $p$ such that $g$ can be written in polar
coordinates as
$$g=ds^2+(\kappa^2s^2+O(s^{2+\ga}))\theta^2$$
for some $\kappa, \ga>0.$
\end{defi}
Note that in the definition \ref{conical} the metric $g$ is singular
at the point $p$ for $\ka\in (0, 1)$ and $g$ is degenerate at  $p$
for $\ka>1.$  Now we give some boundary conditions on $\Te(z)$ such
that each fibre has  conical singularities. For the purpose of
simplicity, we assume that each fibre has the singularities with the
same angle $2\pi \ka$ at $z=\pm 1.$ Define the set of functions for
$\ka>0$
$$\cA(\kappa)=\{\Te(z)\in C^{\infty}[-1, 1]\;|\;\Te(z)>0,\;z\in (-1, 1),\; \Te(\pm1)=0,\; \Te'(\pm1)=\mp2\kappa\}.$$
Note that  $\ka=1$ is exactly the smooth case discussed in
\cite{[ACGF]}.

\begin{lem}If $\Theta(z)\in \cA(\kappa)$ for some $\kappa>0$,
 then the fibre metric $g_f$ defined by (\ref{eq2.7}) has conic
 singularities with angle $2\pi \kappa$ of order $2.$
\end{lem}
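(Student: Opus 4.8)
The plan is to compute the fibre metric $g_f$ in geodesic polar coordinates near $z=1$ and $z=-1$, and match it with Definition \ref{conical} to read off the angle and the order. The natural coordinate to use is the geodesic distance $s$ from the critical point measured along a radial geodesic, since Definition \ref{conical} is phrased in terms of $s$. From the expression $g_f=\frac{dz^2}{\Te(z)}+\Te(z)\theta^2$, the radial distance element is $ds=\frac{dz}{\sqrt{\Te(z)}}$, so near the end $z=1$ I would write $s=\int_z^1\frac{dt}{\sqrt{\Te(t)}}$ and analyze the small-$s$ asymptotics of this integral.

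First I would treat the endpoint $z=1$. Since $\Te\in\cA(\ka)$, the boundary conditions give $\Te(1)=0$ and $\Te'(1)=-2\ka$, so the Taylor expansion of $\Te$ near $z=1$ is $\Te(z)=-2\ka(z-1)+O((z-1)^2)=2\ka(1-z)+O((1-z)^2)$. Setting $u=1-z$ so that $u\to 0^+$, I get $\Te=2\ka u(1+O(u))$, hence $s=\int_0^u\frac{dt}{\sqrt{2\ka t}}\,(1+O(t))=\sqrt{\frac{2u}{\ka}}\,(1+O(u))$. Inverting this relation yields $u=\frac{\ka}{2}s^2(1+O(s^2))$, and substituting back gives $\Te(z)=2\ka u(1+O(u))=\ka^2 s^2(1+O(s^2))$. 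Therefore the angular part becomes $\Te(z)\theta^2=\big(\ka^2 s^2+O(s^4)\big)\theta^2$, while $\frac{dz^2}{\Te(z)}=ds^2$ by construction. Comparing with $g=ds^2+(\ka^2 s^2+O(s^{2+\ga}))\theta^2$ in Definition \ref{conical}, I read off the angle $2\pi\ka$ and order $\ga=2$. The endpoint $z=-1$ is handled symmetrically: there $\Te(-1)=0$ and $\Te'(-1)=2\ka$, so with $u=1+z$ one has $\Te=2\ka u(1+O(u))$ and the identical computation gives the same conclusion.

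The step that requires the most care is controlling the remainder terms when inverting $s(u)$ and re-expanding $\Te$ in powers of $s$, so as to confirm that the error in the coefficient of $\theta^2$ is genuinely $O(s^{2+\ga})$ with $\ga=2$ rather than a lower order. This comes down to the fact that $\Te$ is smooth up to the endpoint, so its expansion in $u$ has no fractional powers and the next correction after the linear term is quadratic in $u$, i.e. quartic in $s$; tracking this through the substitution gives exactly $\Te(z)=\ka^2 s^2+O(s^4)$, which is the $\ga=2$ case of the definition. I do not anticipate any genuine obstacle beyond this bookkeeping, since the geometry is one-dimensional and everything reduces to the elementary asymptotics of the integral $\int\frac{dz}{\sqrt{\Te}}$ near a simple zero of $\Te$.
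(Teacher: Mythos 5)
Your proof is correct and takes essentially the same route as the paper: both pass to the arc-length coordinate $s=\int dz/\sqrt{\Te(z)}$ near a degenerate endpoint and establish the expansion $\Te=\ka^2s^2+O(s^4)$, which is precisely the conical condition with angle $2\pi\ka$ and order $2$. The only difference is mechanical --- the paper verifies the expansion by computing $\frac{d^k}{ds^k}\Te\big|_{s=0}$ for $k\le 4$ via the chain rule, whereas you expand and invert the integral $s(z)$; your appeal to smoothness of $\Te$ up to the endpoint (so that the inverted expansion involves only even powers of $s$) correctly justifies the $O(s^4)$ error term.
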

\begin{proof}We only consider the neighborhood near $z=-1.$ Define
a function $s=s(z)$ by $$s(z)=\int_{-1}^z\;\frac
{dz}{\sqrt{\Te(z)}}.$$ Since $\Te(z)\in \cA(\ka),$ we can check that
$$\frac {d}{ds}\Te\Big|_{s=0}=\frac {d^3}{ds^3}\Te\Big|_{s=0}=0,\quad \frac {d^2}{ds^2}
\Te\Big|_{s=0}=2\ka^2,\quad \frac
{d^4}{ds^4}\Theta\Big|_{s=0}=4\ka^2\Te''(-1),$$ which implies that
$$g_f=ds^2+(\ka^2s^2+O(s^4))\theta^2.$$
The lemma is proved.\\
\end{proof}

The metric of the form (\ref{eq0.1}) for some smooth function
$\Te(z)\in \cA(\ka)$  is called a conical admissible K\"ahler metric
with angle $2\pi \ka$. The complex structure on the fibre will
change when the function $\Te(z)$ varies. However, after a
diffeomorphism every K\"ahler metric defined by different functions
$\Te(z)$ can be viewed as in the same K\"ahler class, which is
called conical admissible K\"ahler class and denoted by $\Om(x,
\ka)$. \\

We can calculate the scalar curvature of an admissible K\"ahler
metric.
\begin{lem}(cf. \cite{[ACGF]})\label{lem:scalar}
The scalar curvature of an admissible metric $g$ is given by
$$S_g=\sum_{i=1}^N\frac {2d_i s_i x_i}{1+x_iz}-\frac {F''(z)}{p_c(z)},$$
where $p_c(z)=\Pi_{i=1}^N (1+x_iz)^{d_i}$ and $F(z)=\Te(z)p_c(z).$

\end{lem}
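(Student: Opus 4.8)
The plan is to read off $S_g$ from the K\"ahler identity $S_g=2\La_{\oo_g}\rho_g$, where $\rho_g=-\pbp\log\det(g_{\alpha\bar\beta})$ is the Ricci form written through the Hermitian determinant in a local holomorphic frame. Everything then reduces to (i) identifying $\det(g_{\alpha\bar\beta})$ as an explicit function of $z$ times the base volume densities, and (ii) tracing the resulting $(1,1)$-form against $\oo_g$; the only geometric inputs are the constant scalar curvatures $S_{g_i}=2d_is_i$ and a few elementary identities for the moment map $z$. This is in essence the computation of \cite{[ACGF]}, adapted so that smoothness of $\Te$ at $z=\pm1$ is never used.

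First I would compute the determinant, which contains the one genuinely delicate point. Since $J\p_z=\Te^{-1}K$, the pair $(z,\te)$ is not adapted to the complex structure; the holomorphically compatible real fibre coordinate is $\tau$ with $d\tau=dz/\Te$, in which the fibre metric becomes $\Te(d\tau^2+\te^2)$. Writing $g$ in the real frame (base$,\tau,\phi$) with $\te=d\phi+a$ and $da=\sum_i\oo_i$, the connection cross-terms between base and $\phi$ cancel by a Schur-complement computation, so the real determinant equals $\Te^2\prod_i\big(\tfrac{1+x_iz}{x_i}\big)^{2d_i}\det_{\RR}g_i$. Its square root gives, up to a positive constant $c$,
$$\det(g_{\alpha\bar\beta})=c\,\Te(z)\,p_c(z)\prod_i\sqrt{\det\nolimits_{\RR}g_i}=c\,F(z)\prod_i\sqrt{\det\nolimits_{\RR}g_i}.$$
This is exactly where the combination $F=\Te p_c$ appears; using $z$ itself as a real coordinate would lose the factor $\Te$ and yield $p_c$ in place of $F$. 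Consequently $\rho_g=\sum_i\rho_{g_i}-\pbp\log F(z)$, the first sum being the pullback of the base Ricci forms.

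Next I would trace the two pieces against $\oo_g$. The base block of $\oo_g^{-1}$ is $\tfrac{x_i}{1+x_iz}g_i^{-1}$, so for the pullback forms $2\La_{\oo_g}\rho_{g_i}=\tfrac{x_i}{1+x_iz}S_{g_i}=\tfrac{2d_is_ix_i}{1+x_iz}$, which is the first term of the formula. For the $z$-part I would use $d^{c}z=\Te\te$, whence $dd^{c}z=\Te'\,dz\wedge\te+\Te\sum_i\oo_i$ and $\p z\wedge\b\p z=-\tfrac{\sqrt{-1}}{2}\Te\,dz\wedge\te$; these yield, for any function $\psi=\psi(z)$, the expansion $-\pbp\psi=-\tfrac12(\psi'\Te)'\,dz\wedge\te-\tfrac12\psi'\Te\sum_i\oo_i$. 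Applying this with $\psi=\log F$ and tracing via $\La_{\oo_g}(dz\wedge\te)=1$ and $\La_{\oo_g}\oo_i=\tfrac{d_ix_i}{1+x_iz}$ gives $2\La_{\oo_g}(-\pbp\log F)=-\big((\log F)'\Te\big)'-(\log F)'\Te\,(\log p_c)'$. The final simplification is immediate: $(\log F)'\Te=\Te F'/F=F'/p_c$ and $(\log p_c)'=p_c'/p_c$, so the expression telescopes to $-F''/p_c$, the second term of the formula.

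The main obstacle is the determinant step: one must pass to the holomorphically adapted coordinate $\tau$ in order not to lose the factor $\Te$, since it is precisely this factor that upgrades $p_c$ to $F$ and makes $F''$ rather than $p_c''$ appear. Once the determinant is pinned down the remainder is bookkeeping, kept clean by two features: $\La_{\oo_g}$ respects the base/fibre splitting, so the two traces decouple; and the additive splitting $\log F=\log\Te+\log p_c$, together with the identity $(\log F)'\Te=F'/p_c$, collapses the fibre contribution with no cancellation left to verify. No curvature information about the base beyond the scalars $S_{g_i}=2d_is_i$ is used, which is why the formula is unaffected by the conical behaviour of $\Te$ at $z=\pm1$.
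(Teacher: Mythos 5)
Your computation is correct: the Schur-complement evaluation of the determinant in the holomorphically adapted fibre coordinate $\tau$ (which is exactly what produces $F=\Te p_c$ rather than $p_c$), the resulting identity $\rho_g=\sum_i\rho_{g_i}-\pbp\log F(z)$, and the two traces $2\La_{\oo_g}\rho_{g_i}=\frac{2d_is_ix_i}{1+x_iz}$ and $2\La_{\oo_g}(-\pbp\log F)=-\big(F'/p_c\big)'-(F'/p_c)(p_c'/p_c)=-F''/p_c$ all check out, including the final telescoping. The paper itself gives no proof of this lemma --- it is quoted from \cite{[ACGF]} --- and your argument is essentially the standard momentum-construction computation carried out there, so the two approaches coincide.
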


The advantage of an admissible metric is that its scalar curvature
only depends on $z$. This directly implies that an admissible metric
is extremal if and only if the scalar curvature is an affine linear
function of $z.$ \\

Now we look for a function $\Te(z)\in \cA( \kappa)$ such that the
corresponding admissible metric $g$ is extremal  with the scalar
curvature $S_g+Az+B=0$ for some constants $A$ and $B.$ For any
$\Te(z)\in \cA(\ka)$, the function $F(z)=\Te(z)\,p_c(z)$ must
satisfy the conditions \beq F(\pm1)=0,\quad F'(-1)=2\kappa
p_c(-1),\quad F'(1)=-2\kappa p_c(1),\label{eq2.8}\eeq and $ F (z)>0$
on $(-1, 1). $ To construct   admissible extremal metrics, we define

\begin{defi}(cf. \cite{[ACGF]})\label{lem:F}  For an admissible K\"ahler
class $\Om(x, \ka),$ the extremal polynomial $F_{\Om}(z)$ is the
function satisfying $F_{\Om}(\pm 1)=0$ and  \beq
F_{\Om}''(z)=\Big(Az+B+\sum_i\frac {2d_is_ix_i}{1+x_iz}\Big)\cdot
p_c(z),\quad z\in (-1, 1). \label{eq2.5}\eeq Here the constants $A$
and $B$ are given by \beq A\al_1+B{\al_0}=-2\bb_{0, \ka},\quad
A\al_2+B\al_1=-2\bb_{1, \ka},\label{eq2.04}\eeq where $\al_r$ and
$\bb_{r, \ka}$ are defined by \beqn
\al_r&=&\int_{-1}^{1}\;p_c(t)t^r\,dt \label{eq2.01}\\
\bb_{r,\ka}&=&\kappa p_c(1)+(-1)^r\kappa
p_c(-1)+\int_{-1}^1\;\sum_i\frac
{d_is_ix_i}{1+x_it}p_c(t)t^r\,dt.\label{eq2.02} \eeqn\\
\end{defi}

 As in Proposition 8 of \cite{[ACGF]}, there is a unique polynomials
$F_{\Om}(z)$ satisfying the conditions in Definition \ref{lem:F}.
Moreover, by the uniqueness of $F_{\Om}(z)$,  we have the following
existence result:

\begin{theo}\label{theo2.1}
On an admissible K\"ahler manifold $M$,  there is an admissible
extremal K\"ahler metric with angle $2\pi \ka$ in an admissible
K\"ahler class $\Om(x, \ka)$ if and only if $F_{\Om}(z)$ is positive
on $(-1, 1)$.

\end{theo}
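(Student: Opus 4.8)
The plan is to reduce the existence of an admissible extremal metric in $\Om(x,\ka)$ to the solvability of the boundary value problem for $\Te(z)$, and then identify that solution with the extremal polynomial $F_{\Om}(z)$. First I would recall that, by Lemma \ref{lem:scalar}, an admissible metric is extremal precisely when its scalar curvature is affine in $z$, i.e. when $S_g+Az+B=0$ for constants $A,B$. Writing $F(z)=\Te(z)\,p_c(z)$ and substituting the formula for $S_g$, this condition becomes exactly the ODE
\begin{equation}
F''(z)=\Big(Az+B+\sum_i\tfrac{2d_is_ix_i}{1+x_iz}\Big)\,p_c(z),\quad z\in(-1,1),\label{eq:prf}
\end{equation}
which is \eqref{eq2.5}. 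So the task is: find $\Te(z)\in\cA(\ka)$, equivalently find $F(z)$ satisfying \eqref{eq:prf} together with the boundary constraints \eqref{eq2.8}, namely $F(\pm1)=0$, $F'(-1)=2\ka\,p_c(-1)$, $F'(1)=-2\ka\,p_c(1)$, and with $F(z)>0$ on $(-1,1)$.

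The key observation is a counting argument. The right-hand side of \eqref{eq:prf} still contains the two free parameters $A,B$. Integrating \eqref{eq:prf} twice produces a function determined up to these two constants plus two constants of integration, so there are four free parameters in total, and I would match them against the four boundary conditions in \eqref{eq2.8}. The two conditions $F(\pm1)=0$ and the two derivative conditions give a linear system; the crucial claim (asserted in the excerpt as Proposition 8 of \cite{[ACGF]}, which I am permitted to assume) is that this system has a unique solution and that the resulting $A,B$ are exactly those determined by the linear equations \eqref{eq2.04} with $\al_r,\bb_{r,\ka}$ as in \eqref{eq2.01}–\eqref{eq2.02}. I would verify the latter by integrating \eqref{eq:prf} against $1$ and against $t$ over $[-1,1]$, using integration by parts and the boundary values of $F,F'$ to convert the left-hand side $\int F''\,t^r\,dt$ into boundary terms ($-2\bb_{r,\ka}$) and the right-hand side into $A\al_{r+1}+B\al_r$ plus the curvature integrals already folded into $\bb_{r,\ka}$. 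This shows that the polynomial $F_{\Om}(z)$ from Definition \ref{lem:F} is precisely the unique candidate $F$, and that $\Te(z)=F_{\Om}(z)/p_c(z)$ is the only possible extremal profile.

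It then remains to show that this candidate genuinely yields a conical admissible extremal metric if and only if $F_{\Om}(z)>0$ on $(-1,1)$. For the \emph{only if} direction, any admissible extremal metric must have $\Te(z)>0$ on $(-1,1)$ by Definition \ref{defi:M}, forcing $F_{\Om}(z)=\Te(z)p_c(z)>0$ there since $p_c>0$. For the \emph{if} direction, assuming $F_{\Om}(z)>0$ on $(-1,1)$, I would set $\Te(z)=F_{\Om}(z)/p_c(z)$ and check that $\Te\in\cA(\ka)$: positivity on $(-1,1)$ is immediate, the vanishing $\Te(\pm1)=0$ follows from $F_{\Om}(\pm1)=0$, and the boundary derivatives $\Te'(\pm1)=\mp2\ka$ follow from $F_{\Om}'(\pm1)=\mp2\ka\,p_c(\pm1)$ together with $p_c(\pm1)>0$ (differentiating $\Te=F_{\Om}/p_c$ and evaluating, using $F_{\Om}(\pm1)=0$ to kill the cross term). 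Smoothness of $\Te$ on $[-1,1]$ is automatic since $F_{\Om}$ is polynomial and $p_c$ is a positive polynomial on $[-1,1]$. Thus $g$ defined by \eqref{eq0.1} is a conical admissible K\"ahler metric, and by construction its scalar curvature satisfies $S_g+Az+B=0$, so it is extremal.

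The main obstacle I anticipate is the \emph{if} direction's regularity at the endpoints: one must confirm that the conical boundary behavior $\Te'(\pm1)=\mp2\ka$ is exactly the membership condition for $\cA(\ka)$ and does not require any higher-order matching beyond what $F_{\Om}$ automatically provides. This is precisely where the earlier lemma on conic singularities is used — it guarantees that $\Te\in\cA(\ka)$ produces a genuine cone of angle $2\pi\ka$, so no extra endpoint analysis is needed. The positivity hypothesis $F_{\Om}>0$ does all the work of ensuring $\Te>0$ in the interior, and the uniqueness from \cite{[ACGF]} guarantees there is nothing to check beyond this single polynomial, making the equivalence a clean consequence of the setup rather than requiring a separate existence PDE argument.
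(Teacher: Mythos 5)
Your proposal is correct and takes essentially the same approach as the paper's proof, which is in fact omitted there with a pointer to the case $\ka=1$ of Proposition 8 in \cite{[ACGF]}: your reconstruction --- reducing the extremal condition via Lemma \ref{lem:scalar} to the ODE (\ref{eq2.5}), pinning down $A,B$ by integrating against $1$ and $t$ to recover (\ref{eq2.04}), and observing that positivity of $F_{\Om}$ is exactly the condition for $\Te=F_{\Om}/p_c$ to lie in $\cA(\ka)$ --- is precisely that argument adapted to general cone angle. The only point worth making explicit is that the derivative boundary conditions $F_{\Om}'(\pm1)=\mp2\ka p_c(\pm1)$ for the candidate of Definition \ref{lem:F} follow by running your integration identities in reverse (they are not assumed in the definition), but your counting argument already contains this.
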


The proof of Theorem \ref{theo2.1} is  the same as in the case
$\ka=1$ of Proposition 8 in \cite{[ACGF]} and we omit it here. In
fact, using Chen-Tian's results of \cite{[CT]}, the result in
\cite{[ACGF]} says that the existence of a general extremal metric
in an admissible K\"ahler class is equivalent to the positivity of
the extremal polynomial on $(-1, 1)$. Thus, we would like to ask
whether the conical version of Chen-Tian's results hold and whether
we can generalize all the results in \cite{[ACGF]} to the conical
case.

\section{Existence of conical extremal metrics}

In this section, we will show a sufficient condition for the
existence of conical admissible  extremal metrics, and give an
example which admits no smooth
 extremal metrics in some admissible K\"ahler classes, but
does admit conical
extremal metrics in any admissible K\"ahler classes.\\

Following the arguments in \cite{[ACGF]}, we have the result:

\begin{theo}\label{theo3}On an admissible K\"ahler manifold $M$, there exists a polymonimal
$G_x(z)$ in $z$ which depends only on the function $p_c(z)$ such
that if $G_x(z)$ is positive on $(-1, 1)$ for some $x\in (0, 1)$,
then $M$ admits a conical extremal metric with "sufficiently large"
angle of order $2$ in the admissible K\"ahler class corresponding to
$x.$

\end{theo}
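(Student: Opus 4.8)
The plan is to reduce the existence of a conical extremal metric (via Theorem~\ref{theo2.1}) to the positivity of the extremal polynomial $F_\Om(z)$ on $(-1,1)$, and then to extract from $F_\Om$ a ``leading part'' that is governed only by $p_c(z)$ when the angle parameter $\ka$ is large. Specifically, I would fix an admissible class $\Om(x,\ka)$ and regard $F_\Om = F_{\Om,\ka}$ as a function of the angle $\ka$. By Definition~\ref{lem:F}, $F_\Om''(z) = (Az + B + \sum_i \tfrac{2d_i s_i x_i}{1+x_iz})\,p_c(z)$ with $A,B$ determined by the linear system (\ref{eq2.04}). The key observation is that $\ka$ enters this system only through $\bb_{0,\ka}$ and $\bb_{1,\ka}$, and in each of those only through the boundary terms $\ka p_c(1) \pm \ka p_c(-1)$, which are linear in $\ka$. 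Hence $A = A(\ka)$ and $B = B(\ka)$ are affine linear in $\ka$, say $A(\ka) = \ka A_1 + A_0$ and $B(\ka) = \ka B_1 + B_0$, where $A_1, B_1$ depend only on $p_c$ (through $\al_0,\al_1,\al_2$ and $p_c(\pm 1)$) and $A_0, B_0$ absorb the scalar-curvature integrals of the base.

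Next I would integrate (\ref{eq2.5}) twice using the boundary conditions $F_\Om(\pm 1) = 0$ to express $F_\Om(z)$ explicitly as an affine linear function of $\ka$:
\begin{equation}
F_{\Om,\ka}(z) = \ka\, G_x(z) + H_x(z),
\end{equation}
where $G_x(z)$ is obtained by integrating $(\,A_1 z + B_1 + 2\sum_i \tfrac{d_i s_i x_i}{1+x_iz}\cdot 0\,)\,p_c(z)$ twice—more precisely, $G_x(z)$ collects exactly the $\ka$-linear contributions and so depends \emph{only} on $p_c(z)$ (equivalently, on $x$ and the dimensions $d_i$), while $H_x(z)$ collects the $\ka$-independent remainder. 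One must check that the $\ka$-dependence is genuinely confined to the polynomial $G_x$; this follows because the only $\ka$-dependent inputs are the affine coefficients $A_1,B_1$ together with the boundary data $F'(\pm1) = \mp 2\ka p_c(\pm1)$, both of which are linear in $\ka$ with coefficients built from $p_c$ alone.

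**With this decomposition in hand,** the conclusion is a continuity/dominance argument: if $G_x(z) > 0$ on the compact-after-normalization interval, then for $\ka$ large enough the term $\ka G_x(z)$ dominates $H_x(z)$ uniformly on $(-1,1)$, so $F_{\Om,\ka}(z) > 0$ there, and Theorem~\ref{theo2.1} produces the desired conical extremal metric. The delicate point is the behavior near the endpoints $z = \pm 1$, where all of $F_{\Om,\ka}$, $G_x$, and $H_x$ vanish, so ``$\ka G_x > H_x$'' cannot be read off by a crude sup bound. To handle this I would compare the functions through their derivatives at the endpoints (using $F'_\Om(\pm1) = \mp 2\ka p_c(\pm1)$ and the corresponding normalizations for $G_x$ and $H_x$) and invoke a Taylor-expansion estimate near $\pm 1$ together with compact-interval positivity in the interior.

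**The main obstacle** I anticipate is precisely this endpoint analysis: establishing uniform positivity of $\ka G_x + H_x$ on the \emph{open} interval when both summands degenerate at the boundary. A clean way around it is to factor out the common double zero, writing $F_{\Om,\ka}(z) = (1-z^2)\,\widetilde F_{\Om,\ka}(z)$ and likewise $G_x(z) = (1-z^2)\widetilde G_x(z)$, $H_x(z) = (1-z^2)\widetilde H_x(z)$, and then argue with the reduced factors $\widetilde G_x, \widetilde H_x$, which are continuous and nonvanishing in a neighborhood of the endpoints. Positivity of $\widetilde G_x$ on the closed interval (guaranteed, after checking the boundary values, by the hypothesis that $G_x>0$ on the open interval together with the sign of $G_x'$ at $\pm1$) then yields, for all sufficiently large $\ka$, that $\ka \widetilde G_x + \widetilde H_x > 0$ uniformly, and hence $F_{\Om,\ka} > 0$ on $(-1,1)$, completing the proof.
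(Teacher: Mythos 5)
Your proposal is correct and follows essentially the same route as the paper's proof: both reduce the statement via Theorem \ref{theo2.1} to positivity of the extremal polynomial $F_{\Omega}$, observe that the linear system for $A,B$ makes $F_{\Omega}$ an affine function of $\kappa$ whose $\kappa$-coefficient $G_x(z)$ depends only on $p_c(z)$, and conclude that $G_x>0$ on $(-1,1)$ forces $F_{\Omega}>0$ for $\kappa$ sufficiently large. Your extra step of factoring out $(1-z^2)$ to control the common vanishing of $G_x$ and the $\kappa$-independent part at $z=\pm 1$ is a careful justification of a dominance claim that the paper simply asserts, not a different method.
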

\begin{proof}Here we following the notations in Section 2. It suffices to find
when the extremal polynomial $F_{\Om}(z)$ is positive for $z\in (-1,
1).$  Note that (\ref{eq2.8}) implies
\beqn \int_{-1}^1\;F''_{\Om}(z)\,dz&=&-2\ka (p_c(1)+p_c(-1))\label{eq7.2}\\
\int_{-1}^1\;F''_{\Om}(z)z\,dz&=&-2\ka (p_c(1)-p_c(-1)).
\label{eq7.3} \eeqn Integrating (\ref{eq2.5}) and using
(\ref{eq7.2})-(\ref{eq7.3}), we have \beq A\al_1+B\al_0=-2\bb_{0,
\ka},\quad A\al_2+B\al_1=-2\bb_{1, \ka}, \label{eq7.4} \eeq where
$\al_r$ and $\bb_{r, \ka}$ are defined in Definition \ref{lem:F}.
Direct calculation shows that
$$A=\frac {2(\bb_{0, \ka}\al_1-\bb_{1, \ka}\al_0)}{\al_0\al_2-\al_1^2},\quad
B=\frac {2(\al_1\bb_{1, \ka}-\al_2\bb_{0,
\ka})}{\al_0\al_2-\al_1^2}.$$ Note that (\ref{eq2.5}) and
(\ref{eq7.2}) implies that
$$F_{\Om}(z)=2\ka p(-1)(z+1)+\int_{-1}^{z}\;\Big(At+B+\sum_{i=1}^N\;\frac {2d_is_ix_i}{1+x_it}\Big)p_c(t)(z-t)\,dt.$$
Observe that $F_{\Om}(z)$ is a linear function of $\ka,$ and we need
the coefficient of $\ka$ is positive for $z\in (-1, 1).$ The
coefficient of $\ka$ in the expression of $\frac
12(\al_0\al_2-\al_1^2)F_{\Om}(z)$ is \beqs
G_x(z):&=&(\al_0\al_2-\al_1^2)p(-1)(z+1)\\
&&+\Big((\al_1-\al_0)p_c(1)+(\al_1+\al_0)p_c(-1)\Big)\int_{-1}^z\;p_c(t)(z-t)t\,dt\\
&&+\Big((\al_1-\al_2)p_c(1)-(\al_1+\al_2)p_c(-1)\Big)\int_{-1}^z\;p_c(t)(z-t)\,dt,
\eeqs which depends only on the function $p_c(z).$ Since
$\al_0\al_2-\al_1^2>0$, $F_{\Om}(z)$ is positive for $z\in (-1, 1)$
if $G_x(z)>0(z\in (-1, 1))$ and $\ka$ is large
enough. The theorem is proved.\\

\end{proof}

The condition $G_x(z)>0(z\in (-1, 1))$ is less restrictive than the
positivity of the extremal polynomial, and it might be true for any
admissible class.  Here we discuss the example by C. T\o
nnesen-Friedman in \cite{[TF1]} where we can calculate
the angle $\ka$  explicitly.\\

\textbf{Example:}  \quad Let $\Si$ be a compact Riemann surface with
constant curvature metric $(g_{\Si}, \oo_{\Si}),$ and $M$ be
$P(\cO\oplus \cL)\ri \Si$ where $\cL$ is a holomorphic line bundle
such that $c_1(L)=\frac 1{2\pi}[\oo_{\Si}].$ Let $2s$ be the scalar
curvature of $g_{\Si}.$ By the Gauss-Bonnet theorem, we have
$$s=\frac {2(1-g(\Si))}{\deg \cL},$$
where $g(\Si)$ is the genus of $\Si.$ We consider the admissible
K\"ahler metrics of the form \beq g=\frac {1+xz}{x}g_{\Si}+\frac
{dz^2}{\Theta(z)}+\Theta(z)\theta^2,\quad x\in (0, 1), \eeq where
$\Te(z)\in \cA(\ka).$ By \cite{[TF1]} and \cite{[ACGF]}, if $s\geq
0$ then there exist extremal metrics in any admissible K\"ahler
classes. However, when $s<0$ there exist no extremal metrics in some
admissible K\"ahler class.
If we allow each fibre has conical singularities, then we have the result:\\

\begin{theo}On the admissible manifold $M$ with genus $g(\Si)>1$ as above,
for any $x\in (0, 1)$ there exists a conical extremal metric with
angle $2\pi \ka$ with
$$\ka >\frac {-sx^2}{(1-x)(3+x)}$$
 in the admissible K\"ahler class
corresponding to $x$.

\end{theo}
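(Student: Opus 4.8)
The plan is to specialize the general criterion from Theorem \ref{theo3} to the concrete bundle $M=\PP(\cO\oplus\cL)\ri\Si$ over a Riemann surface, where $N=1$, $d_1=1$, and $s_1=s$. First I would write down $p_c(z)=1+xz$ explicitly, so that all the integrals $\al_r$ and the polynomial $G_x(z)$ become elementary polynomial integrals in $z$ and rational expressions in $x$. With $p_c(z)=1+xz$ one computes $p_c(1)=1+x$, $p_c(-1)=1-x$, and
\beqs
\al_0&=&\int_{-1}^1(1+xt)\,dt=2,\\
\al_1&=&\int_{-1}^1(1+xt)t\,dt=\tfrac{2x}{3},\\
\al_2&=&\int_{-1}^1(1+xt)t^2\,dt=\tfrac23+\tfrac{2x}{5}\cdot 0,
\eeqs
so $\al_0\al_2-\al_1^2=\tfrac43-\tfrac{4x^2}{9}=\tfrac{4}{9}(3-x^2)>0$, confirming the positivity used in Theorem \ref{theo3}. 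The strategy is then to substitute these values, together with the two inner integrals $\int_{-1}^z p_c(t)(z-t)\,dt$ and $\int_{-1}^z p_c(t)(z-t)t\,dt$, into the displayed formula for $G_x(z)$ and simplify to a completely explicit polynomial in $z$ whose coefficients are rational functions of $x$.

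Once $G_x(z)$ is explicit, the core of the argument is to locate where the extremal polynomial $F_\Om(z)$ can fail to be positive. By Theorem \ref{theo3}, $F_\Om$ is positive on $(-1,1)$ as soon as $\ka$ is large enough and $G_x>0$ there; but here I expect to do better by tracking the $\ka$-dependence exactly rather than merely invoking ``sufficiently large''. Since $F_\Om$ is affine in $\ka$ with $\ka$-coefficient proportional to $G_x(z)$ (up to the positive factor $\tfrac12(\al_0\al_2-\al_1^2)$), and since $F_\Om(\pm1)=0$ with $F_\Om'(-1)=2\ka p_c(-1)>0$ and $F_\Om'(1)=-2\ka p_c(1)<0$, the only obstruction to positivity is an interior sign change. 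The plan is to write $F_\Om(z)=\ka\,G_x(z)/(\tfrac12(\al_0\al_2-\al_1^2))+(\text{$\ka$-independent part})$ and bound the $\ka$-independent part below by $-C(x)(1-z^2)$ and $G_x(z)$ below by $c(x)(1-z^2)$ near the endpoints, so that positivity throughout $(-1,1)$ reduces to the single inequality $\ka>C(x)/c(x)$. The claimed bound $\ka>-sx^2/((1-x)(3+x))$ should emerge as exactly this ratio after simplification; in particular the factor $(1-x)=\tfrac{x}{1+x}\cdot\frac{1+x}{x}$ and the factor $(3+x)$ should trace back to $p_c(-1)=1-x$ and to a combination of $\al_0\al_2-\al_1^2=\tfrac49(3-x^2)$ with the leading behavior of the integrals.

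The main obstacle will be the bookkeeping: carrying out the simplification of $G_x(z)$ and the $\ka$-independent remainder so that the endpoint behavior is visible, and then showing that the critical value of $\ka$ is governed precisely by the worst interior point. Because $g(\Si)>1$ forces $s<0$, the $\ka$-independent part of $F_\Om$ carries a negative contribution proportional to $s$ through the terms $\sum_i \tfrac{2d_is_ix_i}{1+x_it}p_c(t)=2sx$, and this is the only source of potential non-positivity; the competing $\ka$-term is manifestly positive by the assumed positivity of $G_x$ (which for $p_c(z)=1+xz$ I would verify holds for all $x\in(0,1)$, in accordance with the remark that $G_x>0$ should hold for any admissible class). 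Thus the delicate point is to extract a clean lower bound on $G_x(z)/(1-z^2)$ and an upper bound on the magnitude of the $s$-term relative to $(1-z^2)$, uniformly on $(-1,1)$, and to check that their quotient is dominated by $-sx^2/((1-x)(3+x))$. Once those two one-variable estimates are in hand, the theorem follows immediately by choosing $\ka$ above that threshold and applying Theorem \ref{theo2.1}.
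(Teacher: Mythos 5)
Your plan is correct and would prove the theorem, but the closing estimate is organized differently from the paper's, and one of your expectations will not materialize. The paper does not route through Theorem \ref{theo3} at all: it solves the boundary conditions directly for $A=\frac{6x(sx-2\ka)}{3-x^2}$, $B=\frac{6(\ka x^2-sx-\ka)}{3-x^2}$ and obtains the closed form
\begin{equation*}
F_{\Om}(z)=\frac{1-z^2}{2(3-x^2)}\,Q(z),\qquad Q(z)=\ka P_1(z)+sx^3(1-z^2),\qquad P_1(z)=2x^2z^2+(6x-2x^3)z+6-4x^2,
\end{equation*}
which is exactly your decomposition into a $\ka$-part (your $G_x$, up to the positive factor you computed, $\al_0\al_2-\al_1^2=\tfrac49(3-x^2)$) and a $\ka$-independent $s$-part. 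From there the paper argues via the parabola: $Q(\pm1)=2\ka(3-x^2)(1\pm x)>0$, $Q$ opens upward since $s<0$, and $Q>0$ on $(-1,1)$ once its vertex $-\frac{6\ka x-2\ka x^3}{2(2\ka x^2-sx^3)}$ lies to the left of $-1$; that vertex condition is \emph{equivalent} to $\ka>\frac{-sx^2}{(1-x)(3+x)}$, so the stated constant emerges exactly. Your route instead takes term-wise lower bounds: $P_1(z)\ge P_1(-1)=2(1-x)(3-x^2)$ on $[-1,1]$ (the vertex of $P_1$ is at $z=-\frac{3-x^2}{2x}<-1$), and $sx^3(1-z^2)\ge sx^3$, giving positivity of $Q$ whenever $\ka>\frac{-sx^3}{2(1-x)(3-x^2)}$. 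This ratio is \emph{not} equal to the stated bound, so your expectation that $C(x)/c(x)$ ``emerges as exactly'' $\frac{-sx^2}{(1-x)(3+x)}$ is wrong; but it is smaller, since $\frac{-sx^3}{2(1-x)(3-x^2)}\le\frac{-sx^2}{(1-x)(3+x)}$ reduces to $x(3+x)\le 2(3-x^2)$, i.e.\ $(x+2)(x-1)\le 0$, which holds on $(0,1)$. So the theorem follows a fortiori, exactly as you hedge in your final sentence (``dominated by''), and your argument even yields a marginally sharper threshold, at the cost of the extra comparison; the paper's vertex argument recovers the statement's constant with no slack. Two harmless slips to fix when you write it up: your parenthetical $(1-x)=\frac{x}{1+x}\cdot\frac{1+x}{x}$ is false (that product is $1$), and the factors $(1-x)$ and $(3+x)$ in the stated bound come from the vertex inequality, not from $p_c(-1)$ and $\al_0\al_2-\al_1^2$ in the way you guessed.
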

\begin{proof} Here $p_c(z)=1+xz$. We want to
find the extremal polynomial $F_{\Om}(z)=\Te_{\Om}(z)(1+xz)$ such
that $S_g+Az+B=0$ for two constants $A$ and $B.$ By Lemma
\ref{lem:scalar}, we have \beq
F_{\Om}''(z)=(1+xz)\Big(\frac{2sx}{1+xz}+Az+B\Big).\label{eq6.4}
\eeq Note that $F_{\Om}(z)$ satisfies the boundary conditions \beq
F_{\Om}(\pm1)=0,\quad F_{\Om}'(-1)=2\ka(1-x),\quad
F_{\Om}'(1)=-2\ka(1+x). \eeq Thus, $F_{\Om}''(z)$ satisfies \beq
\int_{-1}^1\;F''_{\Om}(z)dz=-4\kappa, \quad
\int_{-1}^1\;zF''_{\Om}(z)dz=-4\kappa x.\label{eq6.5} \eeq Combining
(\ref{eq6.4})-(\ref{eq6.5}), we have
$$A=\frac {6x(sx-2\ka)}{3-x^2},\quad B=\frac {6(\ka x^2-sx-\ka )}{3-x^2},$$
and the function $F_{\Om}(z)$ can be written as \beq
F_{\Om}(z)=\frac {(1-z^2)}{2(3-x^2)}\Big((2\ka x^2-sx^3)z^2+(6\ka
x-2\ka x^3)z+6\ka +sx^3-4\ka x^2\Big). \label{eq201}\eeq We want to
find when $F_{\Om}$ is positive for $z\in (-1, 1).$ Let \beqn
Q(z)&=&(2\ka x^2-sx^3)z^2+(6\ka x-2\ka x^3)z+6\ka +sx^3-4\ka
x^2\nonumber\\&=&
(6xz-2x^3z+6+2x^2z^2-4x^2)\ka-x^3sz^2+x^3s.\label{eq202}\eeqn Note
that the polynomial $6xz-2x^3z+6+2x^2z^2-4x^2$ is strictly positive
for all $x\in (0, 1)$ and $z\in (-1, 1)$, thus $M$ admits a conical
extremal metric in any admissible class. It is easy to find a
sharper bound of $\ka.$ In fact, we can check that
$$Q(1)=(6\ka -2\ka x^2)(1+x)>0,\quad Q(-1)=(6\ka -2\ka
x^2)(1-x)>0.$$ Thus, $Q(z)$ is positive on $(-1, 1)$ if the
following inequality holds
$$-\frac {6\ka x-2\ka x^3}{2(2\ka x^2-sx^3)}<-1,\quad \hbox{or}\quad \ka >\frac {-sx^2}{(1-x)(3+x)}.$$
The theorem is proved.\\

\end{proof}

\section{Estimates}
In this section, we will give some estimates on the modified
$K$-energy and $J$ functional which will be used in the proof of
main theorems.

\subsection{The symplectic potential}
For any $\Te(z)\in \cA(\ka),$ we define the symplectic potential
$u(z)$ of the admissible K\"ahler metric corresponding $\Te(z)$ by
\beq u''(z)=\frac 1{\Te(z)}.\label{eq2.1}\eeq Note that the
symplectic potential is unique up to an affine linear function.  Let
  $g_c$ be the admissible metric with its K\"ahler form $\oo_c$
defined by $\Te_{c, \ka}(z)=\ka(1-z^2)\in \cA(\ka),$ and we can
choose its symplectic potential to be
$$u_{c, \ka}(z)=\frac 1{2\ka}\Big((1-z)\log(1-z)+(1+z)\log(1+z)\Big).$$
 Denoted by $\cC_{\ka}$ the space of functions $u\in C^0([-1, 1])$
satisfying $u''(z)>0$ on $(-1, 1)$ and
$$  u-u_{c, \ka}\in
C^{\infty}([-1, 1]),\quad u(0)=u'(0)=0.$$We can check that for any
$u\in \cC_{\ka}$ the function $\frac 1{u''}$ belongs to $\cA(\ka),$
and thus it defines a conical admissible metric with angle $2\pi
\ka$.
\\

Now we relate the symplectic potential  to the K\"ahler potential.
For any symplectic potential $u\in \cC_{\ka},$ we define the
Legendre transform by \beq y=u'(z),\quad
\varphi(y)=-u(z)+yz,\label{eq2.2}\eeq where $z=(u')^{-1}(y)$ can be
viewed as a function of $y.$ We can check that $$
\varphi'_y(y)=z,\quad \varphi_{yy}''(y)=\Te(z)> 0,\quad z\in (-1,
1).$$ Here we denote $\varphi_y'(y)=d \varphi/d y$ and
$\varphi_{yy}''(y)=d^2\varphi/dy^2$ for simplicity. Note that the
complex structure defined by (\ref{eq0.1}) and (\ref{eq0.2}) on the
fibre is given by
$$J dz=\Te(z)\theta,\quad J\theta=-\frac 1{\Te(z)}dz,$$
we have $J dy=\theta$ and the equalities
$$dJd\varphi=dJ(\varphi'_y(y) dy)=d(z \theta)=z\sum_{i=1}^N\;\oo_i+dz\wedge \theta
=\oo_g-\sum_{i=1}^N \frac 1{x_i}\oo_i.$$ Now fix an admissible
K\"ahler form $\oo_{c}$ and its complex structure $J_{c}$, we have
the result:

\begin{lem}\label{lem:cs}(cf. \cite{[ACGF]}) There exists a fibre-preserving
 diffeomorphism $\Psi$ on
$M$ such that $\Psi^*J=J_c $ and $ \Psi^*y=y_c.$ Thus, any
admissible K\"ahler metric $\oo$ defined by $\Te(z)$ can be view as
in the same K\"ahler class  \beq \Psi^*\oo=\oo_c+dJ_c
d(\varphi(y_c)). \eeq Thus, the admissible K\"ahler class is
identified with the space $\cC_{\ka}.$
\end{lem}

\medskip
We denote by $\Om(x, \ka)$ the admissible K\"ahler class determined
by Lemma \ref{lem:cs} . Any metric in the K\"ahler class $\Om(x,
\ka)$ can be written as
$$\oo_g=\sum_{i=1}^N \frac 1{x_i}\oo_i+dJ_cd\varphi(y_c)=\oo_c+dJ_cd(\varphi(y_c)-\varphi_c(y_c)).$$

\subsection{The modified $K$-energy}
The modified $K$-energy is defined for extremal K\"ahler metrics by
Guan \cite{[Guan]} and Simanca \cite{[simanca]}, and it is a
generalization of the $K$-energy defined by Mabuchi  for
K\"ahler-Einstein metrics.

 Let $g$ be a K\"ahler metric on a compact K\"ahler
manifold $M$,
 $G$ be a maximal compact
connected subgroup of reduced automorphism group and $P_g$ the space
of Killing potentials with respect to any $G$-invariant metric $g$
in the admissible K\"ahler class $\Om.$ Define $pr_g$ the
$L^2$-projection to $P_g$. The modified $K$-energy is defined by
\beq\mu_{g_0}(\varphi)=-\int_0^1\;\int_M\;\pd
{\varphi_t}{t}pr_{g_t}^{\perp}S_{g_t}\;\oo_{g_t}^n\wedge
dt,\label{eq3.1}\eeq where $\varphi_t$ is a path in the space of
K\"ahler potentials which connects $0$ and $\varphi$ and
$\oo_{g_t}=\oo_0+\pbp \varphi_t.$ It can be shown that the
functional $\mu_{g_0}(\varphi)$ is independent of the choice of the
path $\varphi_t.$\\

\begin{lem}\label{lemR}(cf. \cite{[ACGF]}) If $g$ is an admissible metric
defined by $\Te(z)\in \cA(\ka)$, then the $L^2$ projection of $S_g$
orthogonal to the space of Killing potentials is
$$pr_g^{\perp}S_g=\frac {F_{\Om}''(z)-F''(z)}{p_c(z)},$$
where $F_{\Om}$ is the extremal polynomial of $\Om(x, \ka)$ and $F(z)=\Te(z)p_c(z).$\\
\end{lem}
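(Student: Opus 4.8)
The plan is to compute the $L^2$-projection $pr_g S_g$ of the scalar curvature onto the space $P_g$ of Killing potentials explicitly, and then subtract it from $S_g$ to identify the orthogonal complement. By Lemma \ref{lem:scalar} the scalar curvature of the admissible metric defined by $\Te(z)$ is
\[
S_g=\sum_{i=1}^N\frac{2d_is_ix_i}{1+x_iz}-\frac{F''(z)}{p_c(z)},\qquad F(z)=\Te(z)p_c(z),
\]
while the extremal polynomial $F_{\Om}$ of Definition \ref{lem:F} is built precisely so that the candidate ``extremal scalar curvature''
\[
S_{\mathrm{ext}}=\sum_{i=1}^N\frac{2d_is_ix_i}{1+x_iz}-\frac{F_{\Om}''(z)}{p_c(z)}=Az+B
\]
is affine linear in $z$. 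Since the claimed identity reads $pr_g^{\perp}S_g=\big(F_{\Om}''(z)-F''(z)\big)/p_c(z)$, the content of the lemma is equivalent to the statement that $pr_g S_g = Az+B = S_{\mathrm{ext}}$; that is, the $L^2$-projection of $S_g$ onto the Killing potentials is exactly the affine function $Az+B$ whose coefficients are fixed by \eqref{eq2.04}. So the whole proof reduces to verifying that $Az+B$ is the orthogonal projection of $S_g$.

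First I would pin down the space $P_g$ concretely. Because the metric is admissible, the Killing potentials relevant here are generated by the $\SS^1$-action with momentum map $z$; hence $P_g$ is the two-dimensional space spanned by the constant function $1$ and the moment coordinate $z$ (the constants correspond to the trivial factor and $z$ to the fibrewise Hamiltonian $K=J\Na_g z$). The $L^2$ inner product of functions of $z$ alone against the full volume form $\oo_g^n$ reduces, after integrating out the base and fibre $\SS^1$ directions, to the one-dimensional pairing with weight $p_c(z)\,dz$ on $[-1,1]$, since $\oo_g^n$ restricted to functions of $z$ integrates to a constant multiple of $p_c(z)\,dz$. Thus the projection $pr_g S_g$ is the unique affine function $Az+B$ characterized by
\[
\int_{-1}^1\big(S_g-(Az+B)\big)p_c(z)\,dz=0,\qquad \int_{-1}^1\big(S_g-(Az+B)\big)z\,p_c(z)\,dz=0.
\]

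The key step is then to check that these two orthogonality conditions are exactly the linear equations \eqref{eq2.04} defining $A$ and $B$. Substituting $S_g$ and writing $\int F''(z)\,dz$ and $\int F''(z)z\,dz$ via integration by parts using the boundary conditions \eqref{eq2.8} on $F$, the terms involving $\Te$ collapse to the boundary data $\kappa p_c(\pm1)$, which is precisely how the constants $\bb_{r,\ka}$ in \eqref{eq2.02} were defined; the remaining base-curvature terms reproduce the integrals $\al_r$ of \eqref{eq2.01}. Carrying out this identification shows that demanding orthogonality of $S_g$ to $\{1,z\}$ with weight $p_c$ yields precisely $A\al_1+B\al_0=-2\bb_{0,\ka}$ and $A\al_2+B\al_1=-2\bb_{1,\ka}$, so $pr_gS_g=Az+B=S_{\mathrm{ext}}$ and the formula for $pr_g^{\perp}S_g$ follows.

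The main obstacle is the bookkeeping in the reduction of the $n$-dimensional $L^2$-projection to the weighted one-dimensional pairing: one must justify that $P_g$ is really spanned by $1$ and $z$ (i.e.\ that no other Killing potentials contribute for the maximal torus relevant to admissible metrics, and that the projection commutes with restriction to $z$-dependent functions), and then verify carefully that the constant Jacobian factors from integrating over $\td S$ and the $\SS^1$-fibre drop out, leaving exactly the weight $p_c(z)\,dz$. Once that reduction is in hand, matching the two orthogonality conditions to \eqref{eq2.04} is a direct, if slightly tedious, integration-by-parts computation that is essentially the same as in \cite{[ACGF]}, the only new feature being that the boundary derivative $\Te'(\pm1)=\mp2\ka$ (rather than $\mp2$) feeds the factor $\ka$ into $\bb_{r,\ka}$.
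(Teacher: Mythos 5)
The paper offers no proof of this lemma at all: it is quoted from \cite{[ACGF]} (hence the ``cf.'' tag), so there is no internal argument to compare against and your proposal must stand on its own. Its core is sound, and it is exactly the computation that Definition \ref{lem:F} encodes: for an admissible metric the volume form is $p_c(z)\bigl(\bigwedge_i \frac{1}{d_i!x_i^{d_i}}\oo_i^{d_i}\bigr)\wedge dz\wedge\te$, so the $L^2$ pairing of functions of $z$ reduces to the weight $p_c(z)\,dz$ on $[-1,1]$, and the two orthogonality conditions against $1$ and $z$, after integrating $F''$ by parts with the conical boundary data $F'(-1)=2\ka p_c(-1)$, $F'(1)=-2\ka p_c(1)$, are precisely the system (\ref{eq2.04}) defining $A,B$. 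One correction: the paper's convention is $S_g+Az+B=0$ for the extremal metric, so by (\ref{eq2.5}) the candidate extremal scalar curvature is $-(Az+B)$, not $Az+B$ as you wrote; with your sign the orthogonality conditions would yield $A\al_1+B\al_0=+2\bb_{0,\ka}$, contradicting (\ref{eq2.04}). The fix is to characterize the projection by $\int_{-1}^1(S_g+Az+B)\,p_c\,dz=0$ and $\int_{-1}^1(S_g+Az+B)\,z\,p_c\,dz=0$; then the matching works out, with the conical slope $\Te'(\pm1)=\mp2\ka$ feeding $\ka$ into $\bb_{r,\ka}$ exactly as you say.

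The genuine gap is the one you flag but defer: the identification of $P_g$. As literally stated, $P_g=\mathrm{span}\{1,z\}$ is false in general, since the base factors $S_i$ are only assumed simply connected with constant scalar curvature and may well be, e.g., projective spaces, whose Hamiltonian Killing fields lift to Killing potentials on $M$ beyond the affine functions of $z$. What the lemma actually requires is weaker: that the $L^2$ projection of a function of $z$ onto $P_g$ lands in $\mathrm{span}\{1,z\}$, i.e.\ that every element of $P_g$ complementary to $\mathrm{span}\{1,z\}$ is orthogonal to all functions of $z$. This does hold, because such a potential has the form $\frac{1+x_iz}{x_i}f_i$ with $f_i$ pulled back from $S_i$ and normalizable (modulo constants, which are already in $P_g$) to have zero mean on $S_i$; the pairing against any function of $z$ then factorizes as a $z$-integral times $\int_{\td S}f_i\bigl(\bigwedge_j\frac{\oo_j^{d_j}}{d_j!x_j^{d_j}}\bigr)=0$. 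This orthogonality is the only genuinely geometric input of the lemma --- the rest is the bookkeeping you carried out correctly --- so a complete proof must include it rather than list it as an obstacle.
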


For an admissible K\"ahler metric in $\Om(x, \ka)$, we still define
the modified $K$-energy by (\ref{eq3.1}). Note that for an
admissible metric, we have the volume form
$$dV_g=p_c(z)\Big(\bigwedge_{i=1}^N\frac {1}{d_i! x_i^{d_i}}\oo_i^{d_i}\Big)\wedge dz\wedge \te.$$
Using Lemma \ref{lemR} and integrating by parts, we  have \beqn
\mu_{g_c}(\varphi)&=&C_1\cdot \int_0^1\;dt\int_{-1}^1\;\pd ut
(F_{\Om}''(z)-F''(z))dz\nonumber\\
&=& C_1\cdot\int_0^1\;dt\int_{-1}^1\;\pd {u''}t (F_{\Om}(z)-F(z))dz\nonumber\\
&=&C_1\cdot\int_{-1}^1\;\Big(-p_c(z)\log\frac {u''(z)}{u_{c,
\ka}''(z)}+F_{\Om}(z)(u''(z)-u_{c, \ka}''(z))\Big)dz,
\label{eq:K}\eeqn where $C_1=2\pi \vol(S, \Pi_i\frac {\oo_i}{x_i})$
and we used the fact that $F(z)$ satisfies the same boundary conditions as $F_{\Om}(z).$ Thus, we have the lemma:\\

\begin{lem}
The modified $K$-energy $\mu_{g_c}(\varphi)$ is a positive multiple
of the functional \beq \cF (u)= -\int_{-1}^1\;p_c(z)\log\frac
{u''(z)}{u_{c, \ka}''(z)}dz+\int_{-1}^1\;F_{\Om}(z)(u''(z)-u''_{c,
\ka}(z))\; dz ,\nonumber \eeq where $u\in  \cC_{\ka}.$\\
\end{lem}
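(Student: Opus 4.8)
The plan is to show directly that $\mu_{g_c}(\varphi)$ equals $C_1\,\cF(u)$ with $C_1=2\pi\vol(S,\Pi_i\oo_i/x_i)>0$; the positivity of $C_1$ is immediate, so the whole content is the chain of equalities leading to (\ref{eq:K}). First I would start from the definition (\ref{eq3.1}), substitute the projection formula of Lemma \ref{lemR}, namely $pr_{g_t}^{\perp}S_{g_t}=(F_{\Om}''-F_t'')/p_c$ with $F_t=\Te_t p_c$, and insert the volume form $dV_g=p_c(z)(\bigwedge_i \frac{1}{d_i!x_i^{d_i}}\oo_i^{d_i})\wedge dz\wedge\te$. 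Since every factor in the integrand depends only on $z$, the integration over the base $S$ and over the circle fibre factors out and produces exactly the constant $C_1=2\pi\vol(S,\Pi_i\oo_i/x_i)$, reducing the $M$-integral to the one-dimensional $\int_{-1}^1(\,\cdot\,)\,dz$ paired against the $t$-integral. Here I would use the Legendre-duality identity relating the two potentials to replace $\pd{\varphi_t}{t}$ by $\pd{u_t}{t}$, giving the first line of (\ref{eq:K}).

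The key step is then an integration by parts twice in $z$, moving the two $z$-derivatives off $F_{\Om}''-F''$ and onto $\pd{u}{t}$, so that $\int_{-1}^1 \pd ut(F_{\Om}''-F'')\,dz=\int_{-1}^1 \pd{u''}{t}(F_{\Om}-F)\,dz$. The boundary terms at $z=\pm1$ vanish because $F$ and $F_{\Om}$ obey the same data (\ref{eq2.8}): both are zero at $\pm1$ and have equal first derivatives there, so $F_{\Om}-F$ and $(F_{\Om}-F)'$ both vanish at the endpoints, while $\pd ut=\pd{}{t}(u-u_{c,\ka})$ is bounded by the definition of $\cC_{\ka}$. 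Next, using $u''=1/\Te$ and hence $F=\Te p_c=p_c/u''$, I would rewrite the integrand as the total $t$-derivative $\pd{u''}{t}(F_{\Om}-F)=\pd{}{t}(u''F_{\Om})-p_c\,\pd{}{t}(\log u'')$, exploiting that $F_{\Om}$ does not depend on $t$. Integrating in $t$ over $[0,1]$ then telescopes to the endpoint values, with $u=u_{c,\ka}$ at $t=0$ and $u$ at $t=1$, which is precisely (\ref{eq:K}), i.e. $C_1\,\cF(u)$.

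The main obstacle is the careful justification of the two integrations by parts in the presence of the conical singularities at $z=\pm1$: there $u''$ and $u_{c,\ka}''$ both blow up like $1/\Te$, so I must check that $p_c\log(u''/u_{c,\ka}'')$ and $F_{\Om}(u''-u_{c,\ka}'')$ are integrable and that no boundary contribution is lost in the limiting process. Both are controlled by the requirement in the definition of $\cC_{\ka}$ that $u-u_{c,\ka}$ extend smoothly to $[-1,1]$, which makes $u''/u_{c,\ka}''\to1$ and $u''-u_{c,\ka}''$ bounded near the endpoints; combined with $p_c$ bounded and $F_{\Om}$ vanishing at $\pm1$, integrability follows. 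Once (\ref{eq:K}) is established, the lemma is immediate since $C_1>0$.
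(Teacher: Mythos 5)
Your proposal is correct and follows essentially the same route as the paper: substitute the projection formula of Lemma \ref{lemR} and the admissible volume form to reduce to a one-dimensional integral with the constant $C_1$, pass from $\pd{\varphi_t}{t}$ to $\pd{u_t}{t}$ by Legendre duality, integrate by parts twice in $z$ using that $F$ and $F_{\Om}$ share the boundary conditions (\ref{eq2.8}), and then integrate the total $t$-derivative $\pd{}{t}(u''F_{\Om})-p_c\,\pd{}{t}\log u''$ to obtain (\ref{eq:K}). The additional care you take with integrability near $z=\pm1$ is a refinement the paper leaves implicit, but it does not change the argument.
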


 It is proved
by Chen-Tian \cite{[CT]} that if a compact K\"ahler manifold admits
an extremal metric, then the modified $K$-energy is bounded from
below.  Following the argument in \cite{[ACGF]}, we can easily prove
if there is a conical  admissible extremal metric in $\Om(x, \ka)$,
then the modified $K$-energy is bounded from below in $\Om(x, \ka)$.
We will improve this result later.

\subsection{The $J$ functional}

In this section, we follow Zhou-Zhou \cite{[ZZ]} to discuss when the
$K$-energy is proper.  Recall that the $J$ functional defined by
Aubin on the space of K\"ahler potentials, \beq J_g(\varphi)=\frac
1V\int_0^1\int_M\;\pd {\varphi_t}t(\oo_g^n-\oo_{g_t}^n)\wedge
dt\label{eq3.03}\eeq where $\varphi_t$ is a path of K\"ahler
potentials connecting $0$ to $\varphi.$ As in the study of
K\"ahler-Einstein metric by Tian \cite{[Tian97]} , we introduce

\begin{defi}\label{defi:proper} The $K$-energy is called proper if
there is an increasing function $\rho(t)$ on $\RR$ with the property
that
$$\lim_{t\ri +\infty}\rho(t)=+\infty,$$
such that for any K\"ahler potential $\varphi,$
$$\mu_{\oo_g}(\varphi)\geq \rho (J_{ \oo_g}(\varphi)).$$\\

\end{defi}

 Recall that any function
$u\in  \cC_{\ka}$ can be written as $u=u_{c, \ka}+v$ for a smooth
function $v$ on $[-1, 1].$ The K\"ahler potential of $u$ and $u_{c,
\ka}$ is related by
$$\varphi(z)=-u(z)+u'(z)z,\quad \varphi_{c, \ka}(z)=-u_{c, \ka}(z)+u_{c, \ka}'(z)z.$$
Thus, the function $\td \varphi:=\varphi-\varphi_{c, \ka}$ is given
by \beq \td \varphi(z)=-v(z)+v'(z)z,\quad
\td\varphi'_z(z)=v_{zz}''z,\quad \td\varphi\in C^{\infty}[-1,
1].\label{eqb1}\eeq To estimate $J_{\oo_{g_c}}(\td \varphi)$ in the
admissible K\"ahler class, we have the result:

\begin{lem}\label{lemJ1}There exists a uniform constant $C$ such that for all
$u\in  \cC_{\ka}$ the corresponding $\td\varphi$ satisfies \beq
\Big|J_{\oo_{g_c}}(\td \varphi)-C_1\cdot
\int_{-1}^1\;u(z)\,dz\Big|\leq C,\label{eq403}
\eeq where $C_1=2\pi
\vol(S, \Pi_i\frac {\oo_i}{x_i}).$
\end{lem}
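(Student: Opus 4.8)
The plan is to compute $J_{\oo_{g_c}}(\td\varphi)$ explicitly as a one-dimensional integral in the fibre variable $z$, just as was done for the modified $K$-energy in the derivation of $\cF(u)$, and then identify its leading behaviour with $C_1\int_{-1}^1 u(z)\,dz$ up to a bounded error. First I would recall from Section 4.1 that an admissible metric has volume form
$$dV_g=p_c(z)\Big(\bigwedge_{i=1}^N\frac{1}{d_i!\,x_i^{d_i}}\oo_i^{d_i}\Big)\wedge dz\wedge\te,$$
so the integral over $M$ in the definition (\ref{eq3.03}) of $J_g$ collapses, after integrating over the base and the fibre circle, to a factor $C_1$ times an integral in $z$ against $p_c(z)$. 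The key point is that along the path of admissible metrics the potential $\td\varphi$ and the symplectic variable are linked by the Legendre transform (\ref{eqb1}), namely $\td\varphi(z)=-v(z)+v'(z)z$ with $z=u'(y)$, so $\pd{\varphi_t}{t}$ can be rewritten in terms of $\pd{u}{t}$ and the $z$-integration can be carried out.

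The main computational step is to choose a convenient path. I would take the linear path $u_t=u_{c,\ka}+t\,v$ interpolating between $u_{c,\ka}$ and $u$, so that $\oo_{g_t}$ is the admissible metric with symplectic potential $u_t$ and $\pd{u_t}{t}=v$. Substituting into (\ref{eq3.03}) and using the volume form above, the difference $\oo_g^n-\oo_{g_t}^n$ integrated against $\pd{\varphi_t}{t}$ reduces to a double integral $\int_0^1 dt\int_{-1}^1(\cdots)\,dz$ whose integrand is a polynomial-type expression in $v$, $v'$, and the functions $\Te_t(z)$ determined by $u_t$. After integration by parts in $z$ (the boundary terms at $z=\pm1$ being controlled because $v=u-u_{c,\ka}\in C^\infty[-1,1]$ and all the relevant functions satisfy the fixed boundary conditions (\ref{eq2.8})), the $t$-integral can be performed, and I expect the dominant term to be exactly $C_1\int_{-1}^1 v(z)\,dz$, which differs from $C_1\int_{-1}^1 u(z)\,dz$ only by the fixed constant $C_1\int_{-1}^1 u_{c,\ka}(z)\,dz$.

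The hard part will be controlling the remaining, genuinely $u$-dependent terms and showing they are \emph{uniformly} bounded over all $u\in\cC_{\ka}$, not merely bounded for each fixed $u$. These are the terms that couple $v$ to $v''$ (equivalently to $\Te$ and its reciprocal $u''$), and the danger is precisely that such terms could grow as $u''$ degenerates near the conical endpoints $z=\pm1$. I would handle this by exploiting that every $u\in\cC_{\ka}$ has the same singular part $u_{c,\ka}$ prescribed by the fixed angle, so $v=u-u_{c,\ka}$ and its derivatives extend smoothly to $[-1,1]$; after integrating by parts the problematic terms either cancel against each other using the common boundary data or reduce to integrals of quantities whose $z$-integrals telescope to fixed constants depending only on $p_c$, $\ka$, and $u_{c,\ka}$. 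The estimate then follows with $C$ given by the sum of these finitely many boundary and normalization constants, which establishes (\ref{eq403}).
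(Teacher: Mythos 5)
There is a genuine gap, and it sits exactly where the lemma's real content lies. Writing out the definition (\ref{eq3.03}) along any path from $0$ to $\td\varphi$, the functional $J_{\oo_{g_c}}(\td \varphi)$ splits into two terms: the integral against the \emph{moving} volume form $\oo_{g_t}^n$, and the integral against the \emph{fixed} volume form $\oo_{g_c}^n$. For the first term your strategy works and is essentially what the paper does: by Legendre duality $\pd{\varphi_t}{t}\big|_{y_c \,\mathrm{fixed}}=-\pd {u_t}{t}\big|_{z\,\mathrm{fixed}}$, and $\oo_{g_t}^n$ is the standard form $p_c(z)\,dz\wedge\te\wedge\cdots$ in its own momentum coordinate, so this term is computed \emph{exactly} and is linear in $u$; it is the second term of (\ref{eq402}). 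But the other term is simply $\frac 1V\int_M\td\varphi\,\oo_{g_c}^n$ (the $t$-integral is trivial because $\oo_{g_c}^n$ does not move), and it does \emph{not} reduce to a local expression in $v$, $v'$ and $\Te_t$ amenable to integration by parts: in the fixed fibre coordinate the integrand is $-v\big((u_t')^{-1}(u_{c,\ka}'(z_c))\big)$, i.e.\ $v$ composed with inverse Legendre/moment maps, which is nonlinear and nonlocal in $u$. No cancellation of boundary terms and no ``telescoping'' to constants depending on $p_c$, $\ka$, $u_{c,\ka}$ can bound such an expression uniformly over $\cC_{\ka}$; its uniform two-sided bound is precisely the assertion your proposal leaves unproved.

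This is where the paper spends all of its effort: it shows $\int_M\td\varphi\,\oo_{g_c}^n$ is bounded above and below by uniform constants via (i) a sub-mean-value inequality (Claim \ref{claim1}), proved by integrating the differential inequality $\Big((1-z^2)\td\varphi_z'\Big)_z'\geq -C$, which comes from positivity of the perturbed K\"ahler form and replaces the Green's-function argument of \cite{[ZZ]} (unavailable for the conical metric $g_c$); (ii) the gradient bound $\big|\frac{d\varphi}{dy_c}\big|\leq 1$ (Claim \ref{claim2}), which uses convexity of $\varphi$ in $y_c$ together with the asymptotics $\varphi_c(y_c)=\log \frac {e^{y_c}+e^{-y_c}}{2}\approx |y_c|$; and (iii) a volume-concentration argument combining (i), (ii) and the normalization $\td\varphi(0)=0$ to bound $\sup_M\td\varphi$. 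Note also that the danger you single out --- degeneration of $u''$ near $z=\pm 1$ --- is a red herring: every $u\in\cC_{\ka}$ has the same singular part $u_{c,\ka}$ there by definition, and the genuine uniformity problem is the \emph{interior} behaviour of $u$, which can be arbitrarily large in $\cC_{\ka}$ and is tamed only by convexity. Any correct proof must use convexity of the potential in an essential way (for instance, one can bound this term directly from $0\leq \varphi(y_c)\leq |y_c|$, which follows from $u\geq 0$ and $z\in(-1,1)$, together with $|y_c|-\log 2\le \varphi_c(y_c)\le |y_c|$ and the integrability of $u_{c,\ka}'$); your proposal contains no step of this kind.
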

\begin{proof}We follow the argument of Zhou-Zhu \cite{[ZZ]} to prove
the lemma. By the definition (\ref{eq3.03}) of $J_{\oo_{g_c}}(\td
\varphi)$, we have \beqn J_{\oo_{g_c}}(\td\varphi)&=&\frac
1V\int_M\;\td\varphi\,\oo_{\oo_{g_c}}^n- \frac
1V\int_0^1\,\int_M\;\pd {\td\varphi}t\oo_{\varphi_t}^n\wedge
dt\nonumber\\&=& \frac 1V\int_M\;\td\varphi\,\oo_{\oo_{g_c}}^n+\frac
{2\pi}V\vol(S, \Pi_{i=1}^N \frac
{\oo_i}{x_i})\int_{-1}^1\;(u(z)-u_{c,
\ka}(z))p_c(z)dz.\label{eq402}\eeqn Thus, it suffices to show that
$\int_M\;\td\varphi\,\oo_{\oo_{g_c}}^n$ is uniformly bounded from
above and below.  \\

\begin{claim}\label{claim1} We have
\beq \td\varphi(z)\leq \frac
1V\int_M\;\td\varphi\,\oo_{\oo_{g_c}}^n+C, \label{eq4.01}\eeq
 for a uniform constant $C.$
\end{claim}

Note that (\ref{eq4.01}) is proved by the Green's function in
\cite{[ZZ]}, but we lack the lower bound of Green's function for
conical metrics here. However, we can prove it by direct
calculation.

\begin{proof}[Proof of Claim \ref{claim1}]
In fact, recall the fibre metric of $g_c$ is given by
$$g_{c, f}=\frac {dz^2}{\ka(1-z^2)}+\ka(1-z^2)\theta^2.$$
Since $\td\varphi$ is a K\"ahler potential and depends only on $z$,
its Laplacian satisfies \beq
\Delta_{\oo_{g_c}}\td\varphi=\Big((1-z^2)\td\varphi_z'\Big)_z'\geq
-C\label{eqa01}\eeq for a constant $C>0.$ Integrating (\ref{eqa01})
from $-1$ to $z$ and from $z$ to $1$ respectively, we have
$$-\frac {C}{1-z}\leq \varphi_z'\leq \frac C{1+z}.$$
Fix  $z_0\in [-1, 1]$, for any $z\geq z_0$ we have \beq
\td\varphi(z)-\td\varphi(z_0)=\int_{z_0}^z\,\td\varphi_z'(t)\,dt\geq
-\int_{z_0}^z\,\frac C{1-t}\,dt=C(\log(1-z)-\log(1-z_0)),
\nonumber\eeq and integrating $z$ from $z_0$ to $1$, we have \beq
(1-z_0)\td\varphi(z_0)\leq
\int_{z_0}^1\;\td\varphi(z)dz+C.\label{eqa02}\eeq On the other hand,
for $z\leq z_0$ we have \beq
\td\varphi(z_0)-\td\varphi(z)=\int_{z}^{z_0}\,\td\varphi_z'(t)\,dt\leq
\int_{z}^{z_0}\,\frac C{1+t}\,dt=C(\log(1+z_0)-\log(1+z)),
\nonumber\eeq and integrating from $-1$ to $z_0$ we have \beq
(z_0+1)\td\varphi(z_0)\leq
\int_{-1}^{z_0}\;\td\varphi(z)dz+C.\label{eqa03}\eeq Combining the
inequalities (\ref{eqa02})-(\ref{eqa03}) we have
$$\td\varphi(z_0)\leq \frac 12\int_{-1}^1\;\td\varphi(z)dz+C,$$
and the inequality (\ref{eq4.01}) is proved.\\

\end{proof}

Recall that the functions $\varphi$ and $\varphi_c$ defined by $u$
and $u_{c, \ka}$ respectively satisfy
$$y=u'(z),\quad y_c=u'_{c, \ka}(z).$$
Thus, $dy/dy_c>0$ and $y$ can be viewed as a function of $y_c$ for
all $y_c\in \RR.$ For this reason, we still denote by
$\varphi=\varphi(y_c)$ as a function of $y_c$.

\begin{claim}\label{claim2}
We have the inequality
$$\Big|\frac {d\varphi}{dy_c}\Big|\leq 1.$$

\end{claim}
\begin{proof}[Proof of Claim \ref{claim2}] Since $\td \varphi$ is a K\"ahler potential,
the function $\varphi=\varphi_c+\td \varphi$ is convex in $y_c$.
Thus, we have
$$\varphi(y_c)-\varphi(y_0)\geq z_0(y_c-y_0)$$
where $z_0=\frac {d\varphi}{dy_c}|_{y_c=y_0}$ for any $y_0\in \RR.$
Thus, the function $\varphi(y)-z_0y$ is bounded from below on $\RR.$
Direct calculation shows that
$$\varphi_c(y_c)=\log \frac {e^{y_c}+e^{-y_c}}{2},$$
and there is a uniform constant $C$ such that
$$\Big|\varphi_c(y_c)-|y_c|\Big|\leq C,\quad y_c\in \RR.$$
Therefore, for any $y_c\in \RR$ we have \beq |y_c|-z_0y_c\geq
\varphi_c(y_c)-z_0y_c-C\geq \varphi(y_c)-z_0y_c-C',\eeq which is
bounded from below. Here we used the fact that $\td
\varphi=\varphi(y_c)-\varphi_c(y_c)$ is a bounded function on $\RR.$
Since $|y_c|-z_0y_c$ is a piecewise linear function and bounded from
below, we have $z_0\in [-1, 1]$ and the lemma is proved.\\

\end{proof}

 Define the set
$$\Om_N=\{\xi\in M\;|\;\td\varphi(\xi)\leq \sup_M\td \varphi-N\}.$$
Thus, we can check that $\vol_{\oo_{g_c}}(\Om_N)\ri 0$ as $N\ri
+\infty.$ In fact, since \beqs \frac 1V\int_M\;\td \varphi\,
\oo_{\oo_{g_c}}^n&=&\frac 1V \int_{\Om_N}\;\td \varphi\,
\oo_{\oo_{g_c}}^n+\frac 1V \int_{M\backslash\Om_N}\;\td \varphi\,
\oo_{\oo_{g_c}}^n\\
&\leq&\frac {\vol(\Om_N)}{V}(\sup_M\td \varphi-N)+\frac
{V-\vol(\Om)}{V}\sup\td \varphi\\
&=&\sup_M\td \varphi-\frac {N\vol(\Om_N)}{V}.\eeqs Combining this
with the inequality (\ref{eq4.01}), we have
$$\vol(\Om_N)\leq \frac {CV}{N}\ri 0,\quad N\ri+\infty.$$
 On the other hand, since $\td \varphi$ satisfies
\beq \td \varphi(0)=0,\quad \Big|\frac {d\td\varphi}{dy_c}\Big|\leq
1,\label{eq401}\eeq we have
$$\td\varphi(y_c)=\td\varphi(y_c)-\td\varphi(0)\leq \sup_M\Big|\frac {d\td\varphi}{dy_c}\Big|\cdot |y|
\leq |y|.$$ Thus, for any $y_c\in (-1, 1)$ we have
$\td\varphi(y_c)\leq 1.$ Note that $\vol_g(\{p\in M\;|\; |y(p)|\leq
1\})$ is strictly positive, but the volume of the set $\Om_N\ri 0.$
Thus, there exists $y_0\in [-1, 1]$ such that
$$1\geq \td\varphi(y_0)\geq \sup_M\td\varphi(y)-N$$
for $N$ sufficiently large. Thus, $\sup_M\td\varphi(y)\leq N+1$ and
$\int_{M}\,\td \varphi\, \oo_{\oo_{g_c}}^n$ is bounded from above.
On the other hand, by Claim \ref{claim1} we have
$$\frac 1V\int_{M}\,\td \varphi\,
\oo_{\oo_{g_c}}^n\geq \td \varphi(0)-C=-C,$$ where we used
(\ref{eq401}). Since $\int_M\;\td \varphi\, \oo_{\oo_{g_c}}^n$ is
bounded from above and below, by (\ref{eq402}) we have the
inequality (\ref{eq403}). Thus, the lemma is proved.\\
\end{proof}

Define the operator $\cL$ on $\cC_{\ka}$ by
$$\cL u=\int_{-1}^1\;F_{\Om}(z)u''(z)
dz=\int_{-1}^1\;F_{\Om}''(z)u(z)
dz-F_{\Om}'(1)u(1)+F_{\Om}'(-1)u(-1) .$$ We have the result:

\begin{lem}\label{lemJ2} If there exists a constant $\dd>0$ such
that the inequality
$$\cL u\geq \dd \int_{-1}^1\;u(z)\,dz,$$
holds for any $u\in \cC_{\ka},$  then there exists a $\la>0$ such
that for any $u\in \cC_{\ka}$ we have
$$\cF(u)\geq \la \int_{-1}^1\;u(z)\,dz-C_{\la}.$$
\end{lem}
\begin{proof}
We choose a function $v_0\in \cC_{\ka}$ and define a function $G(z)$
by
$$G(z)=\frac {p_c(z)}{v_0''(z)}.$$
Thus, $v_0$ is a critical point of the functional \beqs \td
\cF(u)&=&\int_0^1\;dt\int_{-1}^1\;\pd ut
(G''(z)-F''(z))dz\\
&=& \int_0^1\;dt\int_{-1}^1\;\pd {u''}t (G(z)-F(z))dz\\
&=&\int_{-1}^1\;G(z)(u''(z)-u_{c,
\ka}''(z))dz-\int_{-1}^1\;p_c(z)\log\frac {u''(z)}{u_{c,
\ka}''(z)}dz, \eeqs which is a convex functional on $\cC_{\ka}.$
Thus, the functional $\td \cF(u)$ is bounded from below,
$$\td \cF(u)\geq \td \cF(v_0)\quad u\in \cC_{\ka}.$$
For any  positive constant $k>0$ and $u\in \cC_{\ka},$ we have \beqn
\td \cF(\frac 1k u)&=&\frac 1k \int_{-1}^1\;G(z)(u''(z)-ku_{c,
\ka}''(z))dz-\int_{-1}^1\;p_c(z)\log\frac
{u''(z)}{ku_{c, \ka}''(z)}dz\nonumber\\
&=&\frac 1k\td \cF(u)-\frac {k-1}{k}\int_{-1}^1\;G(z)u_{c,
\ka}''(z)dz+\log k\int_{-1}^1\;p_c(z)dz\nonumber\\&=&\frac 1k\td
\cF(u)-C_k\label{eq3.3}\eeqn for some constant $C_k.$ Thus, the
functional $\td \cF(\frac 1ku)$ is bounded from below on $\td
\cC_{\ka}.$\\

Define the functional \beq \td \cL u=\int_{-1}^1\;G(z)u''(z)\,dz
=\int_{-1}^1\;G''(z)u(z) dz-G'(1)u(1)+G'(-1)u(-1), \nonumber\eeq
where we used the fact that $G(z)$ satisfies the same boundary
conditions as $F_{\Om}(z)$.  Note that \beqs |\cL (u)-\td \cL
(u)|&=&\Big|\int_{-1}^1\;(G''(z)-F_{\Om}''(z))u(z)dz\Big|\\
&\leq& C\cdot \int_{-1}^1\;u(z)\,dz\leq \frac {C+\dd}{\dd}\cdot \cL
(u)-\dd \int_{-1}^1\;u(z)\,dz,\eeqs where $C$ is a positive constant
independent of $u(z).$ Thus, we get
$$\cL(u)\geq \frac {\dd}{C+2\dd}\td \cL(u)+\frac {\dd^2}{C+2\dd}\int_{-1}^1\;u(z)\,dz$$
and \beqs \cF(u)&\geq& \td \cF\Big(\frac {\dd
}{C+2\dd}\;u\Big)+\frac
{\dd^2}{C+2\dd}\int_{-1}^1\;u(z)\,dz-\log\frac {\dd}{C+2\dd} \\
&\geq&\frac {\dd^2}{C+2\dd}\int_{-1}^1\;u(z)\,dz-\log\frac
{\dd}{C+2\dd}-C', \eeqs where we used (\ref{eq3.3}) in the last
inequality. The lemma is proved.
\end{proof}

\section{Proof of main results}
In this section, we will use the estimates in Section 4 to prove
Theorem \ref{theo02}, Theorem \ref{theo03} and Corollary
\ref{cor02}.

\subsection{Proof of Theorem \ref{theo02}} In this
subsection, we will prove Theorem \ref{theo02}.  By Theorem
\ref{theo2.1}, it suffices to show that\\

\begin{theo}\label{theo001} On an admissible manifold $M$, there
exists an extremal K\"ahler metric on $\Om(x, \ka)$ if and only if
the modified $K$-energy is proper.

\end{theo}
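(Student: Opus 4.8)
The plan is to prove both implications of Theorem \ref{theo001} by combining Theorem \ref{theo2.1} — which identifies the existence of an admissible extremal metric in $\Om(x,\ka)$ with the strict positivity of $F_{\Om}$ on $(-1,1)$ — with the estimates on $\cF$, $\cL$ and $J_{\oo_{g_c}}$ from Section 4. Thus it suffices to show that the condition $F_{\Om}>0$ on $(-1,1)$ is equivalent to properness of the modified $K$-energy (equivalently, of $\cF$) on $\cC_{\ka}$, since $\mu_{g_c}$ is a positive multiple of $\cF(u)$ and $J_{\oo_{g_c}}$ is comparable to $\int_{-1}^1 u\,dz$.

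For the forward implication (positivity $\Rightarrow$ properness), the key step is to verify the hypothesis of Lemma \ref{lemJ2}, namely a coercivity estimate $\cL u\ge \dd\int_{-1}^1 u\,dz$. First I would record that every $u\in\cC_{\ka}$ is convex with $u(0)=u'(0)=0$, so $u\ge 0$ and, integrating twice, $\int_{-1}^1 u\,dz=\int_{-1}^1 K(z)\,u''(z)\,dz$ where $K(z)=\tfrac12(1-z)^2$ on $[0,1]$ and $K(z)=\tfrac12(1+z)^2$ on $[-1,0]$. Since $\cL u=\int_{-1}^1 F_{\Om}(z)\,u''(z)\,dz$ with $u''>0$, it is enough to compare $F_{\Om}$ with $K$. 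The crucial point is the mismatch of vanishing orders at the endpoints: $F_{\Om}(\pm1)=0$ with $F_{\Om}'(-1)>0>F_{\Om}'(1)$, so $F_{\Om}$ vanishes only to first order, whereas $K$ vanishes to second order; combined with $F_{\Om}>0$ on the open interval, this makes $F_{\Om}/K$ a continuous positive function on $(-1,1)$ tending to $+\infty$ at both endpoints, hence bounded below by some $\dd>0$. Therefore $F_{\Om}\ge \dd K$ and $\cL u\ge \dd\int_{-1}^1 u\,dz$. Lemma \ref{lemJ2} then gives $\cF(u)\ge \la\int_{-1}^1 u\,dz-C_{\la}$, and Lemma \ref{lemJ1} converts $\int_{-1}^1 u\,dz$ into $J_{\oo_{g_c}}(\td\varphi)$ up to a bounded error, yielding a linear lower bound $\mu_{g_c}\ge c\,J_{\oo_{g_c}}(\td\varphi)-C'$ and hence properness.

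For the converse I would argue by contraposition. Assuming $F_{\Om}$ is not positive on $(-1,1)$, there is an interior point $z_0$ with $F_{\Om}(z_0)\le 0$, and I would construct a family $u_t\in\cC_{\ka}$ with $\int_{-1}^1 u_t\,dz\to\infty$ (so $J_{\oo_{g_c}}\to\infty$ by Lemma \ref{lemJ1}) along which $\cF(u_t)$ fails to grow. Writing $u_t''=u_{c,\ka}''+\phi_t$ with $\phi_t\ge 0$ a smooth bump concentrated near $z_0$, the functional reads $\cF(u_t)=-\int_{-1}^1 p_c\log\bigl(1+\phi_t/u_{c,\ka}''\bigr)\,dz+\int_{-1}^1 F_{\Om}\,\phi_t\,dz+\mathrm{const}$, whose entropy term is $\le 0$. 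If $F_{\Om}(z_0)<0$, a fixed bump gives $\int_{-1}^1 F_{\Om}\,\phi_t\,dz\to-\infty$, so $\cF(u_t)\to-\infty$ while $J_{\oo_{g_c}}\to\infty$; if instead $F_{\Om}\ge 0$ has a (necessarily repeated) zero at $z_0$, I would tune the width $\eps_t\to0$ and height $h_t\sim\eps_t^{-3}$ of the bump so that $\int_{-1}^1 u_t\,dz\sim\eps_t^{-2}\to\infty$ while both $\int_{-1}^1 F_{\Om}\,\phi_t\,dz$ and the entropy term stay bounded, using that $F_{\Om}$ vanishes to second order at $z_0$. In either case the modified $K$-energy is not proper.

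The main obstacle is the converse in the borderline case $F_{\Om}\ge 0$ with an interior zero, where the energy is bounded from below (cf. Theorem \ref{theo03}) yet not proper: the destabilizing family must be chosen so that the logarithmic entropy term and the small nonnegative contribution $\int_{-1}^1 F_{\Om}\,\phi_t\,dz$ are controlled simultaneously while $J_{\oo_{g_c}}$ blows up, which forces the delicate matching of the bump's width and height together with an affine renormalization keeping $u_t\in\cC_{\ka}$. The forward coercivity estimate $\cL u\ge \dd\int_{-1}^1 u\,dz$, resting on the first-order vanishing of $F_{\Om}$ against the second-order vanishing of the kernel $K$ at $z=\pm1$, is the other point requiring care.
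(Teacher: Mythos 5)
Your proposal is correct and follows essentially the same route as the paper: both reduce via Theorem \ref{theo2.1} to the equivalence of positivity of $F_{\Om}$ with properness, prove sufficiency by verifying the coercivity hypothesis $\cL u\ge \dd\int_{-1}^1 u\,dz$ of Lemma \ref{lemJ2} (exploiting the first-order vanishing $F_{\Om}'(\pm1)=\mp 2\ka p_c(\pm1)\neq 0$ at the endpoints, which the paper packages as $F_{\Om}\ge c(1-z)$, $F_{\Om}\ge c'(1+z)$ plus convexity of $u$, and you package as $F_{\Om}\ge\dd K$ against the kernel $K$ in $\int u=\int K u''$), and prove necessity by contradiction using bump functions concentrated at an interior point where $F_{\Om}\le 0$. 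Your bump scaling (width $\ee$, height $\ee^{-3}$) differs only cosmetically from the paper's $kf_k$ construction (width $1/k$, height $k^2$), and both exploit that a nonnegative polynomial vanishes to even order at an interior zero.
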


\begin{proof}Suppose that $M$ admits an admissible extremal K\"ahler
metric on $\Om(x, \ka).$ To prove the properness of the $K$-energy,
by Lemma \ref{lemJ1} and Lemma \ref{lemJ2}
  it suffices to show that
  there is a $\dd>0$ such that for any $u\in
\cC_{\kappa},$ \beq \cL(u)\geq \dd\int_{-1}^1\,u(z)dz.
\label{eq004}\eeq
 In fact, since $F_{\Om}(z)$ is positive on $(-1, 1)$ and satisfies the boundary
 condition (\ref{eq2.8}), there is
a constant $c>0$ such that for any $z\in [0, 1]$ we have
$F_{\Om}(z)\geq c(1-z).$ Note that $u$ is convex, we have
$$\int_0^1\;F_{\Om}(z)u''(z)\,dz\geq c\int_0^1\;(1-z)u''(z)\,dz=c u(1).$$
Similarly, since $F'_{\Om}(-1)>0$ and $F(z)\geq c'(1+z)(z\in [-1,
0])$ for some constant $c'>0$, we have the inequality
$$\int_{-1}^0\;F_{\Om}(z)u''(z)\,dz\geq c'\int_{-1}^0\;(1+z)u''(z)\,dz=c' u(-1)$$
Combining the above inequalities and taking $\dd=\min\{c, c'\}$, we
have
$$\int_{-1}^1\;F_{\Om}(z)u''(z)\,dz\geq \dd(u(1)+u(-1))\geq 2\dd \int_{-1}^1\;u\,dz,$$
where we used the convexity of $u$ in the last inequality. Thus,
(\ref{eq004}) is proved and by Lemma \ref{lemJ1}-\ref{lemJ2} the modified $K$-energy
is proper.  \\

Now we show the necessity part of the theorem. Suppose that the
modified $K$-energy is proper. By Theorem \ref{theo2.1}, we only
need to show that the extremal polynomial $F_{\Om}(z)$ is positive
on $(-1, 1).$ Fix any $u\in \cC_{\ka}$. For any smooth nonnegative
convex function $f(z)$ on $[-1, 1]$ with $f(0)=0$ and $f'(0)=0$, the
functions $u_k=u+kf\in \cC_{\ka}$ for any $k\in \NN.$ We calculate
the modified $K$-energy of $u_k,$ \beqn \cF
(u_k)&=&-\int_{-1}^1\;p_c(z)\log\frac {u''_k(z)}{u_{c,
\ka}''(z)}dz+\cL(u_k)-\int_{-1}^1\;F_{\Om}(z)u''_{c, \ka}(z)\,dz\nonumber\\
&\leq &-\int_{-1}^1\;p_c(z)\log\frac {u''(z)}{u_{c,
\ka}''(z)}dz+\cL(u)+k\cL(f)-\int_{-1}^1\;F_{\Om}(z)u''_{c, \ka}(z)\nonumber\\
&=&\cF(u)+k\cL(f) .\label{eq002}\eeqn Using the inequality
(\ref{eq002}),  we have

\begin{claim}\label{claim3}
If the modified $K$-energy is bounded from below, then the extremal
polynomial $F_{\Om}(z)$ is nonnegative on $(-1, 1).$
\end{claim}
\begin{proof}The claim is due to \cite{[ACGF]} and we give
the details here for completeness.  Suppose  that $F_{\Om}(z)$ is
negative at some point on $(-1, 1).$ Then we can choose a
nonnegative smooth function $r(z)$ on $(-1, 1)$ such that
$$\int_{-1}^1\;F_{\Om}(z)r(z)dz<0.$$
Let $u_k$ be a sequence of functions in $\cC_{\ka}$ satisfying
$u_k''(z)=u_{c, \ka}''(z)+kr(z).$ As $k\ri+\infty$ we have
$$\cF(u_k)\leq \cF(u_{c, \ka})+k\int_{-1}^1\;F_{\Om}(z)r(z)dz\ri -\infty,$$
where we used (\ref{eq002}). Thus, the $K$-energy is not bounded
from below, a contradiction.\\

\end{proof}

Using Claim \ref{claim3}, we can construct a sequence of functions
with some special properties.

\begin{claim}\label{claim4}If $F_{\Om}(z)$ is nonnegative but not positive on $(-1, 1)$, then there is a sequence of smooth convex
functions $f_{k}(z)(k\in \NN)$ on $[-1, 1]$ with
 $f_{\ee}(0)=0, f_{\ee}'(0)=0$ and
 \beq \lim_{k\ri+\infty}\cL(kf_{k})=0,\quad
 \lim_{k\ri+\infty}\int_{-1}^1\;kf_{k}(z)dz=+\infty.\label{eq501}\eeq

\end{claim}

\begin{proof} Define the function $\eta(s)$ on by
\beq \eta(s)=\left\{
  \begin{array}{ll}
     e^{\frac 1{s^2-1}}, &   |s|\leq 1; \\
    0, &   |s|>1.
  \end{array}
\right.\label{eta}\eeq which is a smooth function on $\RR.$ Let
$h_{k}(s)=k\cdot \eta(k(s-z_0))$ and define
$$f_{k}(z)=\int_0^z\;(z-s)h_{k}(s)\,ds.$$
Then we can check that $f_{k}(z)$ is a smooth   convex function on
$(-1, 1)$ and satisfies $$f_{k}(0)=0,\quad f'_{k}(0)=0,\quad
f''_{k}(z)=h_{k}(z).$$ Note that for any $z_0\in (-1, 1)$ we have
$$\lim_{k\ri+\infty}\int_{-1}^1\;f_{k}(z)dz=\frac
12\int_{-1}^1\, (1-|z_0|) e^{\frac 1{t^2-1}}\,dt>0.$$ Thus, we have
$$\lim_{k\ri+\infty}\int_{-1}^1\;kf_{k}(z)dz=+\infty.$$\\

By Claim \ref{claim3} the extremal polynomial $F_{\Om}(z)$ is
nonnegative on $(-1, 1)$. If   $F_{\Om}(z)$ is not positive on $(-1,
1)$, it has repeated roots on $(-1, 1).$ Near a root $z_0\in (-1,
1),$ $F_{\Om}(z)$ can be expressed as $F_{\Om}(z)=g(z)(z-z_0)^{2m}$
for a positive function $g(z)$ on $[z_0-\ee, z_0+\ee]$ and an
integer $m\geq 1.$ Thus, we have
 \beqs
\cL(kf_{k})&=&\int_{-1}^1\;kF_{\Om}(z)f_{k}''(z)\,dz
=\int_{-1}^1\;k F_{\Om}(z)h_{k}(z)\,dz\\
&=&\int_{-1}^1\;k F_{\Om}(z_0+\frac tk)\;e^{\frac 1{t^2-1}}\;dt\\
&=&k^{1-2m}\int_{-1}^1\; g(z_0+\frac tk)t^{2m}\;e^{\frac
1{t^2-1}}\;dt\ri 0,\eeqs as $k\ri +\infty.$ The claim is proved.\\
\end{proof}

Now we proceed to prove the necessity part of the theorem. Since
$\cF(u)$ is proper,  there is an increasing function $\rho(t)$ such
that $\lim_{t\ri+\infty}\rho(t)=+\infty$ and \beq \cF(u)\geq \rho
\Big(\int_{-1}^1\,u(z)\,dz\Big),\quad \forall u\in
\cC_{\ka}.\label{eq500}\eeq If $F_{\Om}$ is not positive on $(-1,
1)$, by Claim \ref{claim4} we can construct $u_k=u+kf_k$ with the
property (\ref{eq501}). Combining this with the inequality
(\ref{eq500}) and (\ref{eq002}), we have
$$\rho \Big(\int_{-1}^1\,(u+kf_k)(z)\,dz\Big)\leq \cF(u)+\cL(kf_k)\ri \cF(u),$$
as $k\ri+\infty.$ However, the left hand side will tend to infinity,
which is a contradiction. Thus, $F_{\Om}(z)$ is positive on $(-1,
1)$ and the theorem is proved.
\end{proof}

\subsection{Proof of Theorem \ref{theo03}}\label{Sec5.2} In this section, we will
prove Theorem \ref{theo03}.  Theorem \ref{theo03} follows from the
following two results:

\begin{theo}On an admissible manifold $M$, the $K$-energy is bounded from
below if and only if $F_{\Om}(z)$ is nonnegative on $\Om(x, \ka).$
\end{theo}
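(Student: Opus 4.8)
The plan is to establish the equivalence between lower boundedness of the $K$-energy and nonnegativity of $F_{\Om}(z)$ on $(-1,1)$, using the explicit formula $\cF(u)$ from Section 4. One direction is immediate from \textbf{Claim \ref{claim3}}, which I may invoke directly: if $\cF$ is bounded below, then $F_{\Om}(z)\geq 0$ on $(-1,1)$. So the substance of the proof lies in the converse: assuming $F_{\Om}(z)\geq 0$ on $(-1,1)$, I must show that
\[
\cF(u)=-\int_{-1}^1 p_c(z)\log\frac{u''(z)}{u_{c,\ka}''(z)}\,dz+\int_{-1}^1 F_{\Om}(z)\bigl(u''(z)-u_{c,\ka}''(z)\bigr)\,dz
\]
is bounded below over all $u\in\cC_{\ka}$.

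The key idea is to handle the two integrals separately and exploit convexity. First I would observe that since $F_{\Om}(z)\geq 0$ and $u''(z)>0$, the term $\int_{-1}^1 F_{\Om}(z)u''(z)\,dz$ is nonnegative, while $\int_{-1}^1 F_{\Om}(z)u_{c,\ka}''(z)\,dz$ is a fixed finite constant depending only on the class. Hence the second integral is bounded below by a universal constant. The remaining task is to bound below the entropy-type term $-\int_{-1}^1 p_c(z)\log\bigl(u''/u_{c,\ka}''\bigr)\,dz$. For this I would use the elementary inequality $-\log t\geq 1-t$ valid for $t>0$, which gives
\[
-\int_{-1}^1 p_c(z)\log\frac{u''(z)}{u_{c,\ka}''(z)}\,dz\geq \int_{-1}^1 p_c(z)\Bigl(1-\frac{u''(z)}{u_{c,\ka}''(z)}\Bigr)\,dz.
\]
The difficulty is that the right-hand side contains $-\int p_c u''/u_{c,\ka}''$, which is not obviously controlled, since $u''$ can be large. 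The resolution is to combine this with the positive contribution from the $F_{\Om}$ term: rather than splitting naively, I would write the two integrands together as $\int_{-1}^1\bigl(F_{\Om}(z)u''(z)-p_c(z)\log u''(z)\bigr)\,dz$ plus constants, and minimize the integrand pointwise in the variable $u''(z)>0$. Since $F_{\Om}(z)\geq 0$, the function $t\mapsto F_{\Om}(z)t-p_c(z)\log t$ is bounded below on $t>0$ whenever $F_{\Om}(z)>0$, attaining its minimum at $t=p_c(z)/F_{\Om}(z)$.

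The main obstacle, and where care is genuinely needed, is the set where $F_{\Om}(z)=0$, i.e.\ at the repeated roots: there the pointwise minimization fails because $F_{\Om}(z)t-p_c(z)\log t\to-\infty$ as $t\to+\infty$. I expect this to be the crux of the argument. The strategy is that such degeneracy occurs only on a measure-zero set of $z$ (finitely many roots), so one must argue that the logarithmic divergence is integrable and cannot overwhelm the global bound. I would control this by a localization argument near each root $z_i$: on $[-1,1]\setminus\bigcup_i(z_i-\ee,z_i+\ee)$ the pointwise minimization gives a uniform lower bound, while on each small interval $(z_i-\ee,z_i+\ee)$ I would use the boundary behavior of $F_{\Om}$ together with the constraint that $u\in\cC_{\ka}$ has prescribed singular behavior of $u''$ matched to $u_{c,\ka}''$ near $z=\pm1$, and the fact that $-p_c(z)\log u''(z)$ integrated against a bounded measure is controlled by a Jensen-type inequality. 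Since $\int_{-1}^1 p_c(z)\,dz$ is finite and $\log$ is concave, Jensen's inequality applied to the probability measure $p_c\,dz/\alpha_0$ yields
\[
-\int_{-1}^1 p_c(z)\log u''(z)\,dz\geq -\alpha_0\log\Bigl(\tfrac{1}{\alpha_0}\int_{-1}^1 p_c(z)u''(z)\,dz\Bigr),
\]
which links the entropy term to $\int p_c u''$; combined with the $F_{\Om}$ term's growth this closes the estimate even at the degenerate points, giving a uniform lower bound for $\cF(u)$ and completing the proof.
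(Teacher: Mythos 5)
Your skeleton coincides with the paper's: necessity is exactly Claim \ref{claim3}, and for sufficiency both you and the paper reduce to the pointwise minimization of $t\mapsto F_{\Om}(z)t-p_c(z)\log t$ over $t>0$. The gap lies in how you handle the crux you correctly identify, namely the repeated roots of $F_{\Om}$. Your closing mechanism is the Jensen inequality
\[
-\int_{-1}^1 p_c(z)\log u''(z)\,dz\geq -\al_0\log\Bigl(\tfrac{1}{\al_0}\int_{-1}^1 p_c(z)u''(z)\,dz\Bigr),
\]
to be ``combined with the $F_{\Om}$ term's growth''. This cannot work. First, as written the bound is vacuous: for every $u\in\cC_{\ka}$ one has $u''\sim u_{c,\ka}''=\frac{1}{\ka(1-z^2)}$ near $z=\pm1$ and $p_c(\pm1)>0$, so $\int_{-1}^1 p_c\,u''\,dz=+\infty$. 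Second, even the localized version near an interior root $z_0$ degenerates: for the concentrating sequence $u_k=u+kf_k\in\cC_{\ka}$ of Claim \ref{claim4}, one has $\int_{z_0-\ee}^{z_0+\ee}p_c\,u_k''\,dz\sim Ck\ri+\infty$, while the would-be compensating term $\cL(kf_k)=\int_{-1}^1F_{\Om}\,kf_k''\,dz\ri0$ precisely because $F_{\Om}$ vanishes to even order at $z_0$ --- there is no growth of the $F_{\Om}$ term exactly where you need it. Along this sequence your lower bound tends to $-\infty$ (Jensen is very lossy for spiked $u''$), even though $\cF(u_k)$ itself stays bounded; so the estimate can never produce a uniform lower bound.

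The correct closing step --- and the paper's --- is simpler and needs no localization: the pointwise inequality $F_{\Om}(z)u''(z)-p_c(z)\log u''(z)\geq p_c(z)\bigl(1+\log\frac{F_{\Om}(z)}{p_c(z)}\bigr)$ holds for almost every $z$ (wherever $F_{\Om}(z)>0$), and its right-hand side, although equal to $-\infty$ at the finitely many roots, is an \emph{integrable} function of $z$: since $F_{\Om}$ is a polynomial, near a root $F_{\Om}(z)=g(z)(z-z_0)^{2m}$ with $g>0$, and $\int\log(z-z_0)^2\,dz$ converges. Integrating the pointwise inequality therefore gives $\cF(u)\geq\int_{-1}^1 p_c\bigl(1+\log\frac{F_{\Om}}{p_c}\bigr)dz+C>-\infty$ uniformly in $u\in\cC_{\ka}$. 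In other words, the quantity that must be bounded below is the fixed function $p_c\log(F_{\Om}/p_c)$, not the $u$-dependent entropy $-\int p_c\log u''$, which genuinely admits no uniform control in terms of $\int F_{\Om}u''$ near the roots. You state this integrability as the goal (``the logarithmic divergence is integrable'') but never execute it, substituting the Jensen mechanism for it; that substitution is where the proof fails.
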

\begin{proof}The necessity part is proved in Claim \ref{claim3}.
We only need to show the sufficiency part. Assume that $F_{\Om}$ is
nonnegative on $(-1, 1).$ If it is positive, then by Theorem
\ref{theo02} the $K$-energy is proper. Thus, it suffices to consider
the case when $F_{\Om}(z)$ has repeated roots on $(-1, 1).$ By the
expression of the $K$-energy,
$$\cF(u)=\int_{-1}^1\;\Big(-p_c(z)\log u''(z)+F(z)u''(z)\Big)\,dz+C.$$
Note that for any $a>0$, we have the inequality,
$$ax-\log x\geq 1+\log a,\quad x\in (0, \infty).$$
Thus, for any convex function $u,$ we have
$$-p_c(z)\log u''(z)+F_{\Om}(z)u''(z)\geq p_c(z)(1+\log \frac {F_{\Om}(z)}{p_c(z)}).$$
Since $p_c(z)$ is positive on $[-1, 1]$,  we only need to check
whether the integral \beq \int_{-1}^1\; \log F_{\Om}(z)\,dz>-\infty.
\label{eq003}\eeq In fact, near a root $z_0\in (-1, 1)$, the
polynomial can be expressed as $F_{\Om}(z)=g(z)(z-z_0)^{2m}$ for
some smooth function $g(z)$ which is positive on $[z_0-\ee,
z_0+\ee]$ and $m\in \NN.$ Here $\ee>0$ is sufficiently small such
that $F_{\Om}(z)$ has no other roots.  Note that
$$\int_{z_0-\ee}^{z_0+\ee}\;\log F_{\Om}(z)dz=
\int_{z_0-\ee}^{z_0+\ee}\;\log
g(z)dz+m\int_{z_0-\ee}^{z_0+\ee}\;\log (z-z_0)^2dz>-\infty.$$ Thus,
the inequality (\ref{eq003}) holds. The theorem is proved.\\

\end{proof}

Recall that  the modified Calabi energy can be expressed by
$$\int_M\;(pr^{\perp}S_g)^2\,dV_g=C\int_{-1}^1\;\frac {(F''(z)-F_{\Om}''(z))^2}{p_c(z)}\,dz$$
where we used Lemma \ref{lemR}. For simplicity, we define the
modified Calabi energy by $$Ca(u)=\int_{-1}^1\;\frac
{(q''(z))^2}{p_c(z)}\,dz,\quad q(z)=F(z)-F_{\Om}(z).$$ Now we have
the result:\\

\begin{theo}On an admissible manifold $M$, the infimum of the modified Calabi
energy on $\Om(x, \ka)$ is zero if and only if $F_{\Om}(z)$ is
nonnegative on $(-1, 1).$
\end{theo}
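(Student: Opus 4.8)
The plan is to establish the two implications of the equivalence separately, handling the ``only if'' direction in contrapositive form.

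For the direction $F_{\Om}\geq 0 \Rightarrow \inf Ca=0$, I would exhibit an explicit minimizing sequence. Set $F_{\ee}=F_{\Om}+\ee(1-z^2)^2$ for $\ee>0$. Since $F_{\Om}\geq 0$, the function $F_{\ee}$ is strictly positive on $(-1,1)$; and because $(1-z^2)^2$ together with its first derivative vanish at $z=\pm1$, the perturbation leaves the boundary data (\ref{eq2.8}) unchanged. Hence $\Te_{\ee}:=F_{\ee}/p_c$ lies in $\cA(\ka)$ and determines a corresponding $u_{\ee}\in\cC_{\ka}$. For this sequence $q_{\ee}=F_{\ee}-F_{\Om}=\ee(1-z^2)^2$, so
$$Ca(u_{\ee})=\int_{-1}^1\frac{(q_{\ee}''(z))^2}{p_c(z)}\,dz=\ee^2\int_{-1}^1\frac{\big([(1-z^2)^2]''\big)^2}{p_c(z)}\,dz\longrightarrow 0$$
as $\ee\to 0$, which shows the infimum is zero.

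For the reverse direction I would argue by contraposition: assuming $F_{\Om}$ is negative somewhere on $(-1,1)$, I show $\inf Ca>0$. Let $m=-\min_{[-1,1]}F_{\Om}>0$; since $F_{\Om}(\pm1)=0$, this minimum is attained at an interior point $z_*\in(-1,1)$. For any $u\in\cC_{\ka}$ the associated $F=p_c/u''$ is strictly positive on $(-1,1)$, so $q=F-F_{\Om}$ satisfies $q(z_*)=F(z_*)-F_{\Om}(z_*)>-F_{\Om}(z_*)=m$. Moreover $q$ inherits the boundary conditions $q(\pm1)=0$ and $q'(\pm1)=0$, because both $F$ and $F_{\Om}$ satisfy (\ref{eq2.8}). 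The key step is a Sobolev-type estimate: using the Taylor expansion with integral remainder based at $z=-1$, namely $q(z)=\int_{-1}^z(z-t)q''(t)\,dt$, and Cauchy--Schwarz, I obtain $\sup_{[-1,1]}|q|\leq C_0\,\|q''\|_{L^2(-1,1)}$ for a universal constant $C_0$ (one checks $C_0=\sqrt{8/3}$ suffices). Writing $P=\max_{[-1,1]}p_c$, this gives
$$Ca(u)=\int_{-1}^1\frac{(q'')^2}{p_c}\,dz\geq\frac{1}{P}\,\|q''\|_{L^2}^2\geq\frac{1}{PC_0^2}\,q(z_*)^2>\frac{m^2}{PC_0^2},$$
a strictly positive bound independent of $u$, contradicting $\inf Ca=0$.

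The main obstacle is ensuring the lower bound in the second direction is genuinely uniform over the infinite-dimensional family $\cC_{\ka}$; this is precisely what the Sobolev inequality delivers, its constant $C_0$ depending only on the fixed interval $(-1,1)$ and not on $u$. The vanishing $q(\pm1)=q'(\pm1)=0$ is essential, since it is what allows the $L^2$-norm of $q''$ to control $\sup|q|$, while the positivity $F>0$ is what converts the negativity of $F_{\Om}$ into the quantitative lower bound $q(z_*)>m$. Combining these two implications with the preceding theorem on lower boundedness then yields the full equivalence of all three conditions in Theorem \ref{theo03}.
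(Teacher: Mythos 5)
Your proof is correct, and it is worth separating the two directions when comparing it to the paper's. For the direction $\inf Ca=0\Rightarrow F_{\Om}\geq 0$ you argue exactly as the paper does: contraposition, using the positivity of $F=p_c/u''$ to turn negativity of $F_{\Om}$ into a uniform pointwise lower bound on $q=F-F_{\Om}$, and then the boundary conditions $q(\pm1)=q'(\pm1)=0$ together with Taylor expansion plus Cauchy--Schwarz to get $\sup|q|\leq C_0\|q''\|_{L^2}$ and hence a positive lower bound on $Ca(u)$ independent of $u$; that the paper integrates from $z$ to $1$ and works on an interval where $F_{\Om}\leq-\ee$, while you integrate from $-1$ and evaluate at the interior minimum point, is immaterial. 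For the converse direction, however, your route is genuinely different and cleaner: the paper splits into cases, invoking Theorem \ref{theo02} when $F_{\Om}>0$ (existence of an extremal metric gives $\inf Ca=0$) and, when $F_{\Om}$ has repeated roots, adding rescaled bumps $\frac 1{n^2}\eta(n(z-z_0))$ localized at each root $z_0$ and computing that the resulting Calabi energies tend to zero (with only a ``similarly'' for several roots). Your single global perturbation $F_{\ee}=F_{\Om}+\ee(1-z^2)^2$ handles all cases at once: $(1-z^2)^2$ vanishes to second order at $\pm1$, so the boundary data (\ref{eq2.8}) are preserved and $\Te_{\ee}=F_{\ee}/p_c\in\cA(\ka)$, while $F_{\ee}>0$ on $(-1,1)$ whenever $F_{\Om}\geq0$, giving $Ca(u_{\ee})=O(\ee^2)$ by direct computation. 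This buys uniformity (no case distinction, no appeal to the existence theorem) at no cost, and is arguably the more natural minimizing sequence for this one-dimensional reduction of the Calabi energy.
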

\begin{proof}Suppose that there is an interval $[a, b]\subset(-1,
1)$ such that $F_{\Om}(z)\leq -\ee $ is negative on $[a, b]$ for
some $\ee>0.$ Since for any $u\in \cC_{\ka}$ the function
$F(z)=\frac {p_c(z)}{u''(z)}$ is always positive on $(-1, 1),$ we
have \beq q(z)=F(z)-F_{\Om}(z)\geq \ee>0,\quad z\in [a,
b].\label{eq101}\eeq Note that $F(z)$ satisfies the same boundary
conditions (\ref{eq2.5}) as $F_{\Om}(z)$, the function $q(z)$ has
$$q(\pm 1)=0,\quad q'(\pm1)=0.$$
Therefore, we have the inequality
$$|q(z)|\leq \Big|\int_z^1\;q''(s)(s-z)\,ds\Big|\leq \frac 1{\sqrt{3}}
\Big(\int_z^1\;q''^2(s)\,ds\Big)^{\frac 12}(1-z)^{\frac 32}.$$
Combining this inequality with (\ref{eq101}), we have
$$Ca(u)\geq \frac 1{\la}\int_{-1}^1\;q''^2(s)\,ds\geq \frac {3\ee^2}{\la(1-a)^3},$$
where $\la=\max\{ p_c(z)|z\in [-1, 1]\}.$ Thus, the modified Calabi
energy has a positive lower bound. \\

Now we show the sufficiency part of the theorem. If $F_{\Om}(z)$ is
positive on $(-1, 1)$, then by Theorem \ref{theo02} $M$ admits
extremal metrics on $\Om(x, \ka)$ and hence the infimum of the
modified Calabi energy is zero. If $F_{\Om}(z)$ is nonnegative and
has repeated roots on $(-1, 1)$, we can choose a sequence of smooth
positive functions $F_n(z)$ with the boundary conditions
(\ref{eq2.5}) such that $F_n(z)$ converges smoothly to $F_{\Om}(z)$
on $(-1, 1)$. Then we can show that the modified Calabi energy
determined by $F_{n}(z)$ tends to zero. For example, suppose that
$F_{\Om}(z)$ has the only root $z_0\in (-1, 1)$ on $(-1, 1)$.  Then
we can choose a sequence of functions,
$$F_n(z)=F_{\Om}(z)+\frac 1{n^2}\eta(n(z-z_0)),$$
where $\eta$ is defined by (\ref{eta}), which is positive on $(-1,
1)$ and satisfies the same boundary as $F_{\Om}(z).$ Let $u_n(z)\in
\cC_{\ka}$ be the function determined by $F_n(z)=\frac
{p_c(z)}{u_n''(z)},$ we have \beqs Ca(u_n)&=&\int_{-1}^1\;\frac
1{p_c(z)}(F''_n(z)-F''_{\Om}(z))^2\,dz\\
&=&\frac 1n\int_{-1}^1\,\frac 1{p_c(z_0+\frac tn)}\,\frac
{4(3t^4-1)^2}{(t^2-1)^8}e^{\frac 2{t^2-1}}\,dt \ri 0.\eeqs
Similarly, we can show the infimum of the modified Calabi energy is
zero if $F_{\Om}(z)$ has many repeated roots on $(-1, 1)$. Thus, the
theorem is proved.\\
\end{proof}

Now we show the last part of Theorem \ref{theo03}. First we
introduce the singularities of a metric:

\begin{defi}\label{cusp} Let $\Si$  be a Riemann surface with
a metric $g$. A point $p\in \Si$ is called
\begin{enumerate}
  \item   a cusp point, if near the point $p$ the metric
$g$ can be written  as
$$g=\rho_1(s)ds^2+\rho_2(s)e^{-2s}\theta^2,\quad s\in [s_0, \infty),$$
where $\rho_1(s)$ and $\rho_2(s)$ are positive smooth functions at
$p.$
  \item   a generalized cusp point, if there is a integer $k\in \NN$ such that
near $p$ the metric  $g$ can be written  as
$$g=\rho_1(s)\frac {ds^2}{s^k}+\rho_2(s)s^k\,\theta^2,\quad s\in (0, s_0],$$
where $\rho_1(s)$ and $\rho_2(s)$ are positive smooth functions at
$p.$ In particular, if $k=2$  we can show that  $p$ is a cusp point.
\end{enumerate}

\end{defi}

 Suppose that
$F_{\Om}(z)$ is nonnegative and has distinct repeated roots
$z_i(1\leq i\leq m)$ with $-1<z_1<\cdots <z_m<1.$ Let $z_0=-1$ and
$z_{m+1}=1.$
 By Part 2 of Definition \ref{defi:M}, the manifold
$M_i=z^{-1}((z_i, z_{i+1}))$ for $0\leq i\leq m$ is a principal
$\CC^*$ bundle over $\td S$, and $F_{\Om}(z)$ is positive on $M_i$.
Thus, $M_i$ admits an admissible extremal K\"ahler metric and we
need to check the behavior of the metric near the ends $z=z_i, z_{i+1}$
if $z_i, z_{i+1}\neq \pm1:$\\

\begin{lem}\label{lem800}The admissible extremal metric on $M_i$ has
generalized cusp singularities at the ends $z=z_i, z_{i+1}$ if
$z_i, z_{i+1}\neq \pm1$.

\end{lem}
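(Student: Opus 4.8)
Near a root $z_i$ of $F_{\Om}$, the extremal polynomial vanishes to even order $2m$ (since $F_{\Om}\geq 0$), so $F_{\Om}(z) = g(z)(z-z_i)^{2m}$ with $g(z_i)>0$. The fiber metric is $g_f = \frac{dz^2}{\Theta(z)} + \Theta(z)\theta^2$ where $\Theta(z) = F(z)/p_c(z)$. The extremal metric on $M_i$ is determined by $F = F_{\Om}$ (the extremal polynomial itself), so $\Theta(z) = F_{\Om}(z)/p_c(z) = g(z)(z-z_i)^{2m}/p_c(z)$. This vanishes to order $2m$ at $z_i$, unlike the conical case where $\Theta'(z_i)\neq 0$. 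The goal is to show this produces a generalized cusp in the sense of Definition \ref{cusp}.

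**The computation I would carry out:** Introduce the arclength coordinate. Near $z=z_i$ set $s = s(z)$ by the radial change of variables. The natural thing is to compute $\int dz/\sqrt{\Theta(z)}$: since $\Theta(z)\sim c(z-z_i)^{2m}$ with $c = g(z_i)/p_c(z_i)>0$, we have $1/\sqrt{\Theta}\sim (z-z_i)^{-m}/\sqrt{c}$. For $m\geq 1$ this integral diverges at $z_i$, so $z_i$ sits at infinite metric distance — this is exactly what makes it a genuine end rather than an interior point. I would define $s$ as the distance to a fixed interior level set, so that $z\to z_i$ corresponds to $s\to\infty$ (when $m=1$) or to a suitable boundary. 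First I would integrate to get $s$ as a function of $(z-z_i)$: for $m=1$, $\frac{ds}{dz} = \Theta^{-1/2}\sim \frac{1}{\sqrt{c}\,(z-z_i)}$, giving $s \sim -\frac{1}{\sqrt c}\log(z-z_i)$, hence $z-z_i \sim e^{-\sqrt{c}\,s}$ and $\Theta \sim c\,e^{-2\sqrt{c}\,s}$, which after rescaling $\theta$ yields precisely the cusp form $g_f = ds^2 + \rho_2(s)e^{-2s}\theta^2$. For $m\geq 2$ the integral converges differently and I expect the $\frac{ds^2}{s^k}$, $s^k\theta^2$ normal form of a generalized cusp with $k$ related to $m$.

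**The plan in order:** First, write $F_{\Om}(z) = g(z)(z-z_i)^{2m}$ near the root with $g>0$ smooth, and observe $\Theta(z) = g(z)(z-z_i)^{2m}/p_c(z)$ vanishes to order $2m$. Second, perform the arclength substitution $s(z) = \int \Theta^{-1/2}\,dz$ and invert it asymptotically to express $z-z_i$ as a function of $s$ near the end, tracking whether $s\to\infty$ or $s\to 0$ in the chosen normalization. Third, substitute back into $g_f = ds^2 + \Theta(z(s))\theta^2$ and identify $\Theta(z(s))$ with the required profile $\rho_2(s)s^k$ (or $\rho_2(s)e^{-2s}$ in the cusp case $k=2$), verifying that $\rho_1,\rho_2$ are positive and smooth in the appropriate variable. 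Finally, conclude by matching against Definition \ref{cusp}, treating the two ends $z_i$ and $z_{i+1}$ of $M_i$ symmetrically.

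**The main obstacle:** The delicate point is the asymptotic inversion of $s(z)$ and the precise identification of the normal-form exponent $k$ from the vanishing order $2m$, together with checking that the coefficient functions $\rho_1,\rho_2$ are genuinely smooth positive functions in the stated variable rather than merely continuous. The distinction between the logarithmic ($m=1$, yielding an honest cusp with exponential decay) and power-law ($m\geq 2$) regimes has to be handled carefully, since Definition \ref{cusp} phrases the cusp with an $e^{-2s}$ factor and the generalized cusp with an $s^k$ factor — reconciling these two forms from a single computation of $\int\Theta^{-1/2}$ requires care. I expect the cleanest route is to compute the first few $s$-derivatives of $\Theta$ at the end (paralleling the conical lemma earlier in the excerpt), confirm the leading behavior, and then absorb the higher-order smooth corrections into $\rho_1,\rho_2$; the rest is routine once the leading exponent is pinned down.
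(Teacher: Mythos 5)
Your starting point coincides with the paper's --- write $F_{\Omega}(z)=g(z)(z-z_i)^{2m}$ with $g>0$ near the root and note that $\Theta_{\Omega}=F_{\Omega}/p_c$ vanishes to order $2m$ --- but there is a genuine gap at the step where you pass to the arclength coordinate: the case $m\ge 2$, which is the entire content of ``generalized cusp,'' is never carried out (you only ``expect'' the normal form), and the arclength route as described cannot deliver it. Indeed, for $m\ge 2$ one has $ds/dz=\Theta_{\Omega}^{-1/2}\sim c^{-1/2}(z-z_i)^{-m}$ with $c=g(z_i)/p_c(z_i)$, hence
\begin{equation*}
s\sim \frac{1}{(m-1)\sqrt{c}}\,(z-z_i)^{-(m-1)}\longrightarrow +\infty,
\qquad
\Theta_{\Omega}(z(s))\sim c\,\bigl((m-1)\sqrt{c}\,s\bigr)^{-\frac{2m}{m-1}},
\end{equation*}
so in the arclength variable the end sits at $s=+\infty$ and the angular coefficient decays with the exponent $2m/(m-1)$, which is not an integer in general (for $m=4$ it equals $8/3$). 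Definition \ref{cusp}(2), by contrast, requires an integer exponent $k$ with the end at $s=0$. Thus the step ``identify $\Theta(z(s))$ with the profile $\rho_2(s)s^k$'' fails in the arclength variable; to reach the normal form you must change variables again by a fractional power, which is exactly undoing the arclength substitution, and the smoothness of $\rho_1,\rho_2$ would then need an argument you have not supplied. The same difficulty appears, more mildly, in your $m=1$ computation: rescaling $\theta$ cannot convert $e^{-2\sqrt{c}\,s}$ into $e^{-2s}$ (one must rescale $s$, which is admissible only because of the factor $\rho_1(s)$ in front of $ds^2$), and the smoothness of the asymptotically inverted function $z(s)$ up to the end is precisely the kind of point that has to be checked, not assumed.

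The missing observation is that the coordinate $s$ in Definition \ref{cusp} is \emph{not} required to be arclength. The paper's proof simply pulls out the conformal factor and takes $s=z-z_i$: with $\Theta_{\Omega}=g(z)(z-z_i)^{2N}/p_c(z)$ the fibre metric is
\begin{equation*}
g_f=\frac{p_c(z)}{g(z)}\,\frac{ds^2}{s^{2N}}+\frac{g(z)}{p_c(z)}\,s^{2N}\theta^2,
\qquad s=z-z_i\in(0,\epsilon],
\end{equation*}
which is literally the generalized-cusp normal form with $k=2N$, $\rho_1=p_c/g$ and $\rho_2=g/p_c$ manifestly smooth and positive at the end. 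For $N=1$ the further (again non-arclength) substitution $z-z_i=e^{-s}$ turns this into $\rho_1(s)ds^2+\rho_2(s)e^{-2s}\theta^2$, with no stray constant in the exponent precisely because the factor $p_c/g$ was factored out rather than absorbed into the coordinate. Your argument can be repaired by abandoning arclength in favor of this direct coordinate identification, but as written the key case $m\ge 2$ is an unverified expectation and the proposed method would not produce it.
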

\begin{proof}Consider the fibre metric near $z=z_i, z_{i+1}(0\leq i\leq m),$
$$g_f=\frac {dz^2}{\Te_{
\Om}(z)}+\Te_{\Om}(z)\theta^2,\quad z\in (z_i, z_{i+1}).$$ Since
$z_i$ is a repeated root of $F_{\Om}(z)$, we can write
$F_{\Om}(z)=g(z)(z-z_i)^{2N}(N\in \NN)$ where $g(z)$ is positive on
$[z_i-\ee, z_i+\ee].$ Thus,   the fibre metric can be written as
\beq g_f=\frac {p_c(z)}{g(z)}\Big(\frac {dz^2}{(z-z_i)^{2N}}+\frac
{g(z)^2}{p_c(z)^2}(z-z_i)^{2N}\theta^2\Big),\quad z\in (z_i,
z_{i}+\ee). \label{eq800} \eeq In the special case of $N=1$, the
fibre metric (\ref{eq800}) has cusp singularities at $z=z_i$. In
fact, let $z-z_i=e^{-s}$ and we have
$$g_f=\rho_1(s)ds^2+\rho_2(s)e^{-2s}\te^2,$$
which is a metric with a cusp singularity. Here $\rho_1(s)$ and
$\rho_2(s)$ are smooth positive functions  near $z=z_i.$ For general
$N\in\NN,$ the metric (\ref{eq800}) has a generalized cusp
singularity, which is complete near $z_i.$\\
\end{proof}

\begin{rem}\label{rem}If $F_{\Om}(z)$ is negative at some points in $(-1, 1)$,
we define
$$I_+=\{z\in (-1, 1)\;|\;F_{\Om}(z)>0\},\quad I_-=\{z\in (-1, 1)\;|\;F_{\Om}(z)\leq 0\},$$
and $M_+=z^{-1}(I_+).$ Then $M_+$ has an admissible extremal metric
with singularities.

\end{rem}
In fact, near a boundary point of $M_+$ with $z=z_0\in \bar I_+\cap
\bar I_-$ and $(z_0, z_0+\ee)\subset I_+$ for small $\ee>0$, the
extremal polynomial can be written as
$F_{\Om}(z)=g(z)(z-z_0)^{2N-1}(N\in \NN)$ where $g(z)$ is smooth and
positive on $(z_0-\ee, z_0+\ee).$ We can discuss the singularities
at $z=z_0$ as in Lemma \ref{lem800}. For $N=1,$ the fibre metric at
$z=z_0$ has a conical singularity with angle $$2\pi \ka=\frac {\pi
F_{\Om}'(z_0)}{p_c(z_0)}>0.$$ For   $N\geq 2,$ the fibre metric has
generalized cusp singularities.

\subsection{Proof of Corollary \ref{cor02}} In this section, we will
show that after carefully choosing the parameters, the extremal
polynomial of the example in Section 3 is nonnegative and has a
repeated root on $(-1, 1)$, hence the modified
$K$-energy is bounded from below but not proper.\\

For simplicity, we only consider the smooth case $\ka=1$ of the
example in Section 3.  By the equality (\ref{eq201}) and
(\ref{eq202}), we need to find the parameters $x$ and $s$ such that
$$Q(z)=(2 x^2-sx^3)z^2+(6 x-2 x^3)z+6 +sx^3-4
x^2$$ has repeated roots on $(-1, 1).$ Note that since $s<0$ the
minimum point of $Q(z)$ \beq -\frac {6x-2x^3}{2(2 x^2-sx^3)}\in (-1,
1).\label{eq301}\eeq We would like to whether there is a root of
$$\Delta(x)=(6 x-2 x^3)^2-4(2 x^2-sx^3)(6 +sx^3-4x^2)=0.$$
In fact, we have\\

\begin{lem}There is a point $x_s\in (-1, 1)$ such that
  $\Delta(x)<0$ for $x\in (0, x_s)$ and     $\Delta(x)>0$ for
  $x\in (x_s, 1)$.

\end{lem}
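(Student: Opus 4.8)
The plan is to reduce the statement to a convexity analysis of a single quartic. First I would expand $\Delta(x)$ explicitly. Collecting terms in the definition of $\Delta$ gives
$$\Delta(x)=-12x^2+24sx^3+8x^4-24sx^5+(4+4s^2)x^6=4x^2\,P(x),\qquad P(x)=(1+s^2)x^4-6sx^3+2x^2+6sx-3,$$
so that on $(0,1)$ the sign of $\Delta$ coincides with the sign of $P$, since $4x^2>0$ there. It therefore suffices to prove that $P$ has a unique sign change on $(0,1)$, passing from negative to positive.

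Next I would record the endpoint values $P(0)=-3<0$ and $P(1)=s^2>0$; recall that $s<0$ because $g(\Si)>1$ and $\deg\cL>0$, so $P(1)=s^2$ is strictly positive. This already guarantees at least one root of $P$ in $(0,1)$. The structural fact that controls uniqueness is the strict convexity of $P$: its second derivative is
$$P''(x)=12(1+s^2)x^2-36sx+4,$$
and because $s<0$ each of the three terms is nonnegative for $x\ge 0$, with constant term $4$, so $P''(x)\ge 4>0$ on $[0,1]$. Hence $P$ is strictly convex.

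From strict convexity, $P'$ is strictly increasing; since $P'(0)=6s<0$ and $P'(1)=4s^2-12s+8=4(s-1)(s-2)>0$ (both factors negative), the derivative has a unique zero $x_0\in(0,1)$, so $P$ strictly decreases on $(0,x_0)$ and strictly increases on $(x_0,1)$. Consequently $P(x)<P(0)=-3<0$ throughout $(0,x_0]$, while on $[x_0,1]$ the function $P$ rises monotonically from its negative minimum to $P(1)=s^2>0$ and therefore crosses zero exactly once, at a point $x_s\in(x_0,1)$. This yields $P<0$ on $(0,x_s)$ and $P>0$ on $(x_s,1)$, and multiplying by $4x^2>0$ gives the claimed sign behaviour of $\Delta$.

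The only genuinely delicate point is ruling out a second sign change, that is, establishing uniqueness of the root in $(0,1)$; this is exactly what the strict convexity of $P$ provides, and the hypothesis $s<0$ is what makes $P''>0$ transparent by rendering every coefficient of $P''$ nonnegative on $[0,1]$. I would therefore treat the convexity computation and the two derivative-endpoint sign evaluations as the heart of the argument, with the expansion of $\Delta$ and the final sign reading as routine bookkeeping.
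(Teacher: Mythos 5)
Your proof is correct, and it takes a mildly but genuinely different route from the paper's. The paper works directly with the sextic $\Delta$ and runs the monotonicity argument four levels deep: it computes $\Delta^{(4)}(x)=192+1440x^2-2880sx+1440s^2x^2>0$, then descends through $\Delta^{(3)}$, $\Delta''$, $\Delta'$, at each stage checking the signs at the endpoints $0$ and $1$ and invoking the ``decreasing then increasing, hence one sign change'' step, finally concluding from $\Delta(0)=0$ and $\Delta(1)>0$ (the paper writes $\Delta(1)=4+s^2$, which is a harmless typo for $4s^2$, consistent with your $P(1)=s^2$). Your factorization $\Delta(x)=4x^2P(x)$ strips out the double root at the origin and drops the degree to four, so a single convexity statement $P''(x)\geq 4>0$ (transparent because $s<0$ makes every coefficient of $P''$ nonnegative on $[0,1]$) together with the endpoint signs $P'(0)=6s<0$, $P'(1)=4(s-1)(s-2)>0$, $P(0)=-3$, $P(1)=s^2$ finishes the argument in two derivative levels instead of four. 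The underlying mechanism is the same in both proofs --- positivity of a higher derivative, forced by $s<0$, propagates down to a unique sign change --- but your decomposition makes the descent shorter and the computations lighter; the paper's version avoids the need to spot the factorization, at the cost of two extra rounds of endpoint evaluations. All of your computations check out against the paper's expression for $\Delta$.
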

\begin{proof}By direct calculation, we have
$$\Delta^{(4)}(x)=192+1440x^2-2880sx+1440s^2x^2>0,\quad x\in (0, 1).$$
Since $\Delta^{(3)}(0)=144s<0$ and
$\Delta^{(3)}(1)=672-1296s+480s^2>0,$ there is a point $x_3\in (0,
1)$ such that $\Delta^{(3)}(x)<0$ for $x\in (0, x_3)$ and
$\Delta^{(3)}(x)>0$ for $x\in (x_3, 1).$ Similarly, since
$\Delta^{''}(0)=-24<0$ and $\Delta^{''}(1)=192-336s+120s^2>0$, there
is a point $x_2\in (0, 1)$ such that $\Delta^{''}(x)<0$ for $x\in
(0, x_2)$ and $\Delta^{''}(x)>0$ for $x\in (x_2, 1).$ Now direct
calculation show that
$$\Delta'(0)=0,\quad \Delta'(1)=32-48s+24s^2>0.$$
We can also show that there is a point $x_1\in (0, 1)$ such that
$\Delta^{'}(x)<0$ for $x\in (0, x_1)$ and $\Delta^{'}(x)>0$ for
$x\in (x_1, 1).$ Combining this with $\Delta(0)=0, \Delta(1)=4+s^2>0
$, we know there is a point $x_s\in (0, 1)$ such that $\Delta(x)<0$
for $x\in (0, x_s)$ and $\Delta (x)>0$ for $x\in (x_s, 1).$

\end{proof}

 Thus, for any $s<0$, there is a $x_s\in (0, 1)$
such that $\Delta(x_s)=0$ and (\ref{eq301}) holds. For the
admissible K\"ahler class $\Om(x_s, 1)$, the extremal polynomial
$F_{\Om}(z)$ is nonnegative and has a repeated root $z_s\in (-1,
1).$ By Theorem \ref{theo02} and Theorem \ref{theo03}, the modified
$K$-energy is bounded from below but not proper. Moreover, $M$ can
split into two parts $M_0=z^{-1}((-1, z_s))$ and $M_1=z^{-1}((z_s,
1))$, and each part admits admissible extremal metrics with a cusp
singularity on the fibre.

If $x\in (0, x_s)$, $\Delta(x)<0$ and $Q(z)$ has no root on $(-1,
1).$ Thus, $M$ admits a smooth admissible extremal metric on $\Om(x,
1).$ If $x\in (x_s, 1)$, $\Delta(x)>0$. Note that
$$Q(1)=6+6x-2x^2-2x^3>0,\quad Q(-1)=(1-x)(6-2x^2)>0.$$
Combing this with (\ref{eq301}),  $Q(z)$ has   two simple zeros on
$(-1, 1).$ Thus, $M$ can split into
  three parts,  two of which satisfy $F_{\Om}(z)>0$ and admit
admissible extremal metrics with conical singularities on the fibre
by Remark \ref{rem}.

\noindent Department of Mathematics, \\
University of Science and Technology of China,\\
230026, Anhui province, China. \\
Email: hzli@ustc.edu.cn\\

\end{document}